\newtheorem{theorem}{Theorem}[section]
\newtheorem{lemma}[theorem]{Lemma}
\begin{document}

\title {Harmonic Green and Neumann functions for domains bounded by two intersecting circular arcs}
\author{Hanxing Lin}
\date{}
\maketitle
\begin{center} Math. Institute, FU Berlin, Arnimallee 3, D-14195 Berlin, Germany
\end{center}
\begin{center}
	hxlin@zedat.fu-berlin.de
\end{center}

\begin{flushright}
	{\it Dedicated to Professor Dr. Heinrich Begehr  $\;\;\;\;\;\;\;$ \\
		on the occasion of his $80^{\textrm{th}}$ birthday} $\;\;\;\;\;\;\;$
\end{flushright}

\noindent{\bf Abstract.} The parqueting-reflection principle is shown to also work for constructing harmonic Green functions and harmonic Neumann functions for a class of domains, which are bounded by two arcs in $\mathbb{C}_\infty$ with a special  intersecting angle $\pi/n,\,n\in\mathbb{N}^*$. Applying the Green representation formula and the Neumann representation formula we solve the Dirichlet and Neumann boundary problem to the Poisson equation in these domains.

\noindent{\bf Keywords.} Poisson equation, harmonic Green function, harmonic Neumann function, parqueting-reflection principle 

\noindent{\bf Mathematics Subject Classifications:} 31A05 35A08 35J08 35J25 

\section{Circle Reflection in $\mathbb{C}_\infty$}
 Any circle or straight line in $\mathbb{C}$ can be expressed uniformly by an equation of the form
\[
az\bar{z}+\bar{b}z+b\bar{z}+c=0,
\]
where $a,\,c\in\mathbb{R},\,b\in\mathbb{C}$, and $ac-b\bar{b}<0$. It is a straight line if $a=0$, and a circle whose center is $-{b}/{a}$ with a radius ${\sqrt{b\bar{b}-ac}}/{|a|}$ if $a\ne0$. In both cases we just call them  circles in the extended complex plane $\mathbb{C}_\infty$, which is $\mathbb{C}\cup\{\infty\}$.  

We see that a circle in $\mathbb{C}_\infty$ is  determined by a matrix
\[
\left(
\begin{matrix}
a &\bar{b} \\
b & c
\end{matrix}
\right),\ \text{where}\  a,\ c\in\mathbb{R},\ b\in\mathbb{C},\ \mathrm{and} \ ac-b\bar{b}\ne0,
\]
a $2\times2$ Hermitian matrix with negative determinant. We call it a matrix of a circle in $\mathbb{C}_\infty$. A matrix of a circle is unique up to a nonzero real scalar multiple. 
Let $\mathrm{H}^{-}=\{A\in \mathrm{GL}_2(\mathbb{C}):A=A^{*},\,\mathrm{det}(A)<0\}$ be all the $2\times2$ Hermitian matrices with negative determinant. We define an equivalence relation, denoted by $\sim$, in $\mathrm{H}^{-}$: $A\sim B$ in $\mathrm{H}^{-}$ if and only if $A=\lambda B$ for some $0\ne \lambda\in\mathbb{R}$. There exists one to one correspondence between the collection of all circles in $\mathbb{C}_\infty$ and all the equivalent classes in $\mathrm{H}^{-}$. 

Next discussion is about reflections at circles in $\mathbb{C}_\infty$. A reflection at the circle $\{az\bar{z}+\bar{b}z+b\bar{z}+c=0\}$ is defined by the mapping

\[
\begin{array}{cl}
	\phi\,: & \mathbb{C}_\infty\longrightarrow \mathbb{C}_\infty,\\
	& z\mapsto -\frac{b\bar{z}+c}{a\bar{z}+\overline{b}}.
	\end{array}
\]
When the mirror is a straight line, it is just the reflection at the line in typical sense. When the mirror is the circle $\{|z-z_0|=r\}$, it is the circle reflection, which sends $z$ to $z_0+{r^2}/(\bar{z}-\bar{z_0})$.

Since \(\mathbb{C}_\infty\) is the same as  the complex projective line $\mathbb{C}\mathrm{P}^1$, a point in $\mathbb{C}_\infty$ can be represented by a homogeneous coordinate \([z:w]\), where \(z,\,w\in\mathbb{C}\) but cannot both be \(0\). 
It corresponds to \({z}/{w}\) if \(w\neq0\), and to \(\infty\) if \(w=0\). The multiplication of a homogeneous coordinate and an invertible matrix is similarly defined as the normal matrices multiplication, namely as
\[
[z:w]
\left(
\begin{matrix}
a&b\\
c&d
\end{matrix}
\right)
\overset{\mathrm{def}}{=}[az+cw:bz+dw].
\]
The conjugate of a homogeneous coordinate \(\overline{[z:w]}\) can be naturally defined by \([\bar{z}:\bar{w}]\). 
Then a reflection through a circle
\(A=\left(
\begin{smallmatrix}
a&\bar{b}\\
b&c
\end{smallmatrix}
\right)\), denoted by \(R_A\), can be given by
\[
R_A([z:w])=\overline{[z:w]
	\left(
	\begin{matrix}
	a&\bar{b}\\
	b&c
	\end{matrix}\right)
	\left(
	\begin{matrix}
	0 & 1\\
	-1 & 0
	\end{matrix}\right)}
\]

We close this section with a small but useful result. 
\begin{lemma}\label{lma1}
	Reflecting circle \(B\) at  circle \(A\) gives the circle \(AB^{-1}A\).
\end{lemma}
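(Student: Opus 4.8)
The plan is to reduce the statement to an identity of Hermitian quadratic forms in homogeneous coordinates. Substituting a representative $(z,w)$ of a point $[z:w]$ into the circle equation $az\bar z+\bar b z+b\bar z+c=0$ and clearing denominators shows that $[z:w]$ lies on the circle with matrix $A=\left(\begin{smallmatrix}a&\bar b\\ b&c\end{smallmatrix}\right)$ if and only if $vA\bar v^{\,T}=0$, where $v=(z,w)$ is a row vector. Write $\langle v,A\rangle:=vA\bar v^{\,T}$; this is real (because $A=A^{*}$) and, under $v\mapsto\lambda v$, scales by the positive factor $|\lambda|^{2}$, so only its vanishing is intrinsic to the point. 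The strategy is then to show that the reflection $R_A$ carries the form attached to $AB^{-1}A$ back to the form attached to $B$ up to a nonzero real constant, which immediately gives that $R_A$ maps the circle $B$ onto the circle $AB^{-1}A$.

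First I would check the easy preliminaries. The matrix $C:=AB^{-1}A$ is again a legitimate circle matrix: it is Hermitian since $C^{*}=A^{*}(B^{-1})^{*}A^{*}=C$, and $\det C=\det(A)^{2}/\det(B)<0$ because $\det A<0$ and $\det B<0$. Also $R_A$ is everywhere defined and is an involutive bijection of $\mathbb{C}_\infty$: writing $J=\left(\begin{smallmatrix}0&1\\ -1&0\end{smallmatrix}\right)$, well-definedness follows from $A$ (hence $AJ$) being invertible, so $vAJ\neq0$ whenever $v\neq0$, and bijectivity/involutivity is the standard fact that a circle reflection squares to the identity.

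The heart of the argument is a short matrix computation from $R_A(v)=\overline{vAJ}$. With $q=R_A(v)$ one gets $q=\bar v\,\bar A\,J$ and $\bar q^{\,T}=J^{T}A^{T}v^{T}$, hence
\[
\langle q,C\rangle=q\,C\,\bar q^{\,T}=\bar v\,\bar A\,(JCJ^{T})\,A^{T}v^{T}.
\]
Now I would invoke the $2\times2$ adjugate identity $JMJ^{T}=\det(M)\,(M^{-1})^{T}$, together with $C^{-1}=A^{-1}BA^{-1}$ and the Hermitian relation $\bar A=A^{T}$, to telescope the middle factor: $\bar A\,(C^{-1})^{T}A^{T}=A^{T}(A^{-1})^{T}B^{T}(A^{-1})^{T}A^{T}=B^{T}$. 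This yields $\langle q,C\rangle=\det(C)\,\bar v\,B^{T}v^{T}=\det(C)\,\langle v,B\rangle$ with $\det(C)\neq0$ real. Therefore $[z:w]$ lies on $B$ iff $R_A([z:w])$ lies on $C$, i.e.\ $R_A^{-1}(C)=B$; applying the bijection $R_A$ gives $R_A(B)=C=AB^{-1}A$.

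I expect the only real obstacle to be the bookkeeping in that central computation: because the reflection formula carries a complex conjugation, one must consistently trade $\bar A$ for $A^{T}$ (and $\bar B$ for $B^{T}$), and one must invoke the special $2\times2$ identity relating $JMJ^{T}$ to the adjugate (equivalently, the inverse) of $M$. Nothing deeper is involved; once the conjugations and this $J$-identity are in hand, the matrices cancel in pairs and the scalar $\det(A)^2/\det(B)$ drops out.
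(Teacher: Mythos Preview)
Your proof is correct and follows essentially the same route as the paper's: both arguments rewrite the circle equation as the vanishing of a Hermitian form in homogeneous coordinates, substitute the reflection $R_A$ written via the matrix $J=\left(\begin{smallmatrix}0&1\\-1&0\end{smallmatrix}\right)$, and reduce to the $2\times2$ adjugate identity (the paper states it as $P\bar B\bar P^{T}\sim B^{-1}$, you as $JMJ^{T}=\det(M)(M^{-1})^{T}$). The only cosmetic differences are that you compute $\langle R_A(v),\,AB^{-1}A\rangle$ and simplify to $\det(C)\langle v,B\rangle$, whereas the paper inverts the relation via involutivity and substitutes into the equation for $B$; you also add the explicit check that $AB^{-1}A\in\mathrm{H}^{-}$, which the paper omits.
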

\begin{proof}
Suppose \([z:w]\) is a point on the circle \(B\) and its reflection at the circle \(A\) is \([u:v]\). Because \([z:w]\) satisfies the equality
\[
[z:w]B
\left[
\begin{matrix}
\bar{z}\\
\overline{w}
\end{matrix}
\right]=0,
\]
substituting \([z:w]\) by 
\(
\overline{[u:v]AP}
\), where 
\(
P=	\left(
\begin{smallmatrix}
0 & 1\\
-1 & 0
\end{smallmatrix}\right)
\), 
gives
\[
[u:v]AP\bar{B}\bar{P}^T\bar{A}^T
\left[
\begin{matrix}
\bar{u}\\
\bar{v}
\end{matrix}
\right]=0.
\]
Direct calculation shows that \(P\bar{B}\bar{P}^T\sim B^{-1} \). Therefore \([u:v]\) is on the circle \(AB^{-1}A\).
\end{proof}

\section{Parqueting reflections for domains bounded by two circular arcs in $\mathbb{C}_\infty$}

 Based on the conformal invariant property of harmonic Green function, one way to obtain harmonic Green functions for domains is to construct the conformal mappings from these domains onto a domain whose Green function is clearly known. But unfortunately it is normally not easy to get conformal mappings, and sometime conformal mappings may be too complicated. In these cases the parqueting-reflection principle is employed, which provides a method to construct harmonic Green functions, as well as harmonic Neumann functions. It has been verified working for several special domains whose boundary is composed of circular arcs or straight lines, for instance discs,  disc sectors, half planes, strips, half strips, cones, concentric rings, hyperbolic strips and many more, see [1-8]. 

We aim to apply the Parqueting reflection principle to a special class of domains in the complex plane, which are bounded by two circular arcs that intersect at a special angle $\pi/n$, $ n\in\mathbb{N}$. In this article an arc means specially a segment of a circle or straight line. There exists four cases for this class of domains, as shown in Figure \ref{fig1}.  
\begin{figure}
	\centering
	\includegraphics[width=0.8\linewidth]{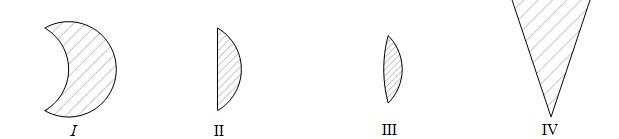}
	\caption{Four cases of domains bounded by two circular arcs}
	\label{fig1}
\end{figure}

Since every disc can be mapped conformally onto the unit disc, without loss of generality, we assume that one arc is a part of the unit circle. Suppose that
\[
D_0=\{z\in\mathbb{C} : z\bar{z}-1\le 0, z\bar{z}\sin(\alpha-\theta)+z\sin{\theta}+\bar{z}\sin{\theta}-\sin(\alpha+\theta)\ge 0 \},
\]
where $\partial{D_0}=C_0\cup C_1$, $C_1$ is a segment of the unit circle. $C_0$ and $C_1$ intersect each other at two points,  $e^{i\alpha}$ and $e^{-i\alpha}$ where $0<\alpha<\pi$, the angle between the two arcs is $\theta$, $0<\theta\le\pi$. Figure \ref{fig2} shows this domain in detail. 

Reflecting $D_0$ at $C_1$ generates a new domain, denoted by $D_2$, $\partial D_2=C_1\cup C_2$. Carrying out successive steps of reflection, each time reflecting the new domain at its new generated boundary, produces a sequence of arcs $C_k$ and domains $D_k$. See that $C_k$ is the reflection of $C_{k-2}$ at $C_{k-1}$, $D_k$ is the reflection of $D_{k-1}$ at $C_k$, and $D_k$ is bounded by $C_{k-1}$ and $C_k$, i.e. $\partial D_k=C_{k-1}\cup C_k$. Since angles are preserved under reflections, $C_{k-1}$ and $C_k$ always intersect at the angle $\theta$. If $\theta=\pi/n$,  after operating $2n-1$ times of reflections we see that $C_{2n}$ coincides with $C_0$, then these circular arcs $C_0, C_1,\cdots,C_{2n-1}$ divide the plane into $2n$ domains, which are just $D_0,D_1,\cdots,D_{2n-1}$. Thereby the sequence of domains $\{D_0,D_1,\cdots,D_{2n-1}\}$ provides a parqueting of the complex plane, i.e. $\mathbb{C}=\bigcup_{k=0}^{2n-1}\overline{D_k}$.

\begin{figure}
	\centering
	\includegraphics[width=0.8\linewidth]{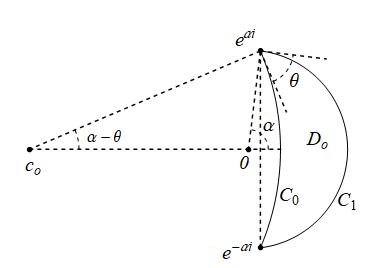}
	\caption{Depiction of $D_0$}
	\label{fig2}
\end{figure}

The following lemma provides the matrices of the boundary arcs.
\begin{lemma}
	The matrix of $C_k$ is
	\[
	M_{C_k}=\left(
	\begin{matrix}
	-\sin(\alpha+(k-1)\theta) & \sin(k-1)\theta  \\
    \sin(k-1)\theta  & \sin(\alpha-(k-1)\theta) 
	
	\end{matrix}
	\right).
	\]
	
\end{lemma}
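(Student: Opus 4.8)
The plan is to induct on $k$, using Lemma~\ref{lma1} together with the recursion ``$C_k$ is the reflection of $C_{k-2}$ at $C_{k-1}$'' (valid for $k\ge2$). Write $N(\phi)=\left(\begin{smallmatrix}-\sin(\alpha+\phi)&\sin\phi\\ \sin\phi&\sin(\alpha-\phi)\end{smallmatrix}\right)$, so that the asserted formula reads $M_{C_k}\sim N((k-1)\theta)$. First I would check the two base cases directly against the data: $C_1$ is the unit circle $z\bar z-1=0$, whose matrix $\left(\begin{smallmatrix}1&0\\0&-1\end{smallmatrix}\right)$ equals $(-\sin\alpha)^{-1}N(0)$; and the second inequality defining $D_0$ shows that $C_0$ has matrix $\left(\begin{smallmatrix}\sin(\alpha-\theta)&\sin\theta\\ \sin\theta&-\sin(\alpha+\theta)\end{smallmatrix}\right)=-N(-\theta)$. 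Since a matrix of a circle is only determined up to a nonzero real scalar, this settles $k=0$ and $k=1$.

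For the inductive step, assume the formula for $C_{k-2}$ and $C_{k-1}$. By Lemma~\ref{lma1}, $M_{C_k}\sim M_{C_{k-1}}M_{C_{k-2}}^{-1}M_{C_{k-1}}\sim N((k-2)\theta)\,N((k-3)\theta)^{-1}\,N((k-2)\theta)$. The key observation is that $\det N(\phi)=-\sin(\alpha+\phi)\sin(\alpha-\phi)-\sin^2\phi=-\sin^2\alpha$ for \emph{every} $\phi$, by the identity $\sin(A+B)\sin(A-B)=\sin^2A-\sin^2B$; in particular this determinant is negative (recall $0<\alpha<\pi$), so each $N(\phi)$ is a genuine circle matrix, and $N(\phi)^{-1}=-\sin^{-2}\alpha\,\operatorname{adj}N(\phi)$. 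Hence the inductive step reduces to the purely trigonometric matrix identity
\[
N(\beta)\,\operatorname{adj}\!\left(N(\gamma)\right)\,N(\beta)=-\sin^2\alpha\;N(2\beta-\gamma)\qquad(\beta,\gamma\in\mathbb R),
\]
applied with $\beta=(k-2)\theta$ and $\gamma=(k-3)\theta$, since then $2\beta-\gamma=(k-1)\theta$ and therefore $M_{C_k}\sim N((k-1)\theta)$, closing the induction. (Conceptually this identity expresses that all the $C_k$ lie in the pencil of circles through the two fixed points $e^{\pm i\alpha}$ — each reflection at $C_{k-1}$ fixes $e^{\pm i\alpha}\in C_{k-1}$ and preserves $C_{k-2}$ — and that reflection at one member of the pencil doubles the angular parameter, the M\"obius map $z\mapsto\frac{z-e^{i\alpha}}{z-e^{-i\alpha}}$ carrying this pencil to the pencil of lines through the origin.)

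To verify the displayed identity I would use $\operatorname{adj}X=(\operatorname{tr}X)I-X$ for $2\times2$ matrices and the splitting $N(\phi)=\sin\alpha\cos\phi\,E+\sin\phi\,F$ into the fixed matrices $E=\left(\begin{smallmatrix}-1&0\\0&1\end{smallmatrix}\right)$, $F=\left(\begin{smallmatrix}-\cos\alpha&1\\1&-\cos\alpha\end{smallmatrix}\right)$. From $E^2=I$, $FEF=-\sin^2\alpha\,E$ and the Cayley--Hamilton relation $N(\beta)^2=-2\cos\alpha\sin\beta\,N(\beta)+\sin^2\alpha\,I$ one obtains $N(\beta)\,E\,N(\beta)=\sin\alpha\,N(2\beta)$ and, by the same kind of computation, a companion expression for $N(\beta)\,F\,N(\beta)$; substituting and collecting the coefficients of $E$ and $F$ turns the identity into the addition and product-to-sum formulas for sine and cosine.

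The one genuinely computational point is this last trigonometric bookkeeping — there is no structural obstacle — and an alternative to it is the direct, non-inductive route through the pencil: push each $C_k$ forward by the above M\"obius map, use that a planar reflection at the line through $0$ at angle $\psi$ sends the line at angle $\psi'$ to the one at angle $2\psi-\psi'$ (so the images of the $C_k$ are the lines through $0$ at angles in arithmetic progression with common difference $\pm\theta$), and transport back via $M_{C_k}\sim\mathcal T\,L\,\mathcal T^{*}$, with $\mathcal T$ the matrix of the M\"obius map and $L$ the matrix of the appropriate line; matching the two base cases fixes the offset and sign. Either way the hypothesis $\theta=\pi/n$ is not needed here — it only ensures the closure $C_{2n}=C_0$, which is then immediate from the formula, since $(2n-1)\theta\equiv-\theta\pmod{2\pi}$ gives $N((2n-1)\theta)=N(-\theta)$.
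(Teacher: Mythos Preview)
Your proof follows exactly the paper's scheme: induction on $k$, checking $k=0,1$ against the defining equations of $C_0$ and $C_1$, and then invoking Lemma~\ref{lma1} to reduce the inductive step to $M_{C_{k-1}}M_{C_{k-2}}^{-1}M_{C_{k-1}}\sim N((k-1)\theta)$. The paper simply asserts this last product identity without elaboration, whereas you organize its verification via the splitting $N(\phi)=\sin\alpha\cos\phi\,E+\sin\phi\,F$ and add the pencil-of-circles interpretation (and the alternative M\"obius-conjugation argument) as a conceptual gloss; these additions are correct and illuminating, but the underlying approach is the same.
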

\begin{proof}
	Verify this conclusion by induction on $k$.
	\[
	M_{C_0}=
	\left(
	\begin{matrix}
	\sin(\alpha-\theta) & \sin{\theta}  \\
	\sin{\theta}  & -\sin(\alpha+\theta) 
	\end{matrix}
	\right)
	\sim
	\left(
	\begin{matrix}
	-\sin(\alpha-\theta) & \sin(-\theta)  \\
	\sin(-\theta)  & \sin(\alpha+\theta) 
	\end{matrix}
	\right),
	\]
	and 
	\[
	M_{C_1}=
	\left(
	\begin{matrix}
	1 & 0  \\
	0 & -1
	\end{matrix}
	\right)
	\sim
	\left(
	\begin{matrix}
	-\sin\alpha & 0  \\
	0 & \sin\alpha 
	\end{matrix}
	\right),
	\]
	they both coincide with the formula. 
	Assume that 
	\[
	M_{C_{k-2}}=\left(
	\begin{matrix}
	-\sin(\alpha+(k-3)\theta) & \sin(k-3)\theta \\
	\sin(k-3)\theta  & \sin(\alpha-(k-3)\theta) 
	\end{matrix}
	\right),
	\]
	and 
	\[
	M_{C_{k-1}}=\left(
	\begin{matrix}
	-\sin(\alpha+(k-2)\theta) & \sin(k-2)\theta  \\
	\sin(k-2)\theta  & \sin(\alpha-(k-2)\theta) 
	\end{matrix}
	\right),
	\]
	since $C_{k}$ is obtained by reflecting $C_{k-2}$ through $C_{k-1}$, via Lemma \ref{lma1} we validate that
	\begin{equation*}
	\begin{split}
	M_{C_k} & =M_{C_{k-1}}M^{-1}_{C_{k-2}}M_{C_{k-1}}\\
	& \sim
	\left(
	\begin{matrix}
	-\sin(\alpha+(k-1)\theta) & \sin(k-1)\theta  \\
	\sin(k-1)\theta  & \sin(\alpha-(k-1)\theta)
	\end{matrix}
	\right)
	\end{split}
	\end{equation*}	
\end{proof}

 Let $z_0=z\in D_0$. The successive reflections which generate the domains $\{D_k\}$ also produce a series of reflection points generated by $z_0$, denoted by $\{z_k\}$. It means that $z_k\in D_k$ and reflecting $z_k$ at the circle $C_{k+1}$ gives $z_{k+1}\in D_{k+1}$. Especially, reflecting $z_0$ at the unit circle $C_1$ gives $z_1=1/\bar{z}\in D_1$. Notice that  $z_k=z_{2n-k-1}$ holds when $z$ belongs to circle $C_0$, while $z_{2k}=z_{2k+1}$ holds when $z$ lies on the circle $C_1$. Actually, there are varieties of viewpoints to investigate the generation of $\{D_k\}$ and $\{z_k\}$. Reflecting $D_0$ at $C_k$ also produces $D_{2k-1}$, while reflecting $D_1$ at $C_{k+1}$ turns out to be $D_{2k}$. It means that  $z_{2k}$ is the image of $z_1$ under reflection through $C_{k+1}$, while $z_{2k+1}$ be the image of $z_0$ under the reflection through $C_{k+1}$. Via the discussion of reflections in Section 1, we deduce that
\begin{equation*}
\begin{split}
z_{2k} & =\overline{[1:\bar{z}]M_{C_{k+1}}\left(
	\begin{matrix}
	0 & 1\\
	-1 & 0
	\end{matrix} \right)}\\
& =\frac{-z\sin(\alpha-k\theta)-\sin k\theta}{z\sin k\theta-\sin(\alpha+k\theta)},
\end{split}
\end{equation*}
and
\begin{equation*}
\begin{split}
z_{2k+1} & =\overline{[z:1]M_{C_{k+1}}\left(
	\begin{matrix}
	0 & 1\\
	-1 & 0
	\end{matrix} \right)}\\
& =\frac{\bar{z}\sin k\theta+\sin(\alpha-k\theta)}{\bar{z}\sin(\alpha+k\theta)-\sin k\theta}.
\end{split}
\end{equation*}

\section{Harmonic Green function of \bf{$D_0$}}

In this section we apply the parqueting-reflection principle to construct the harmonic Green function for $D_0$ and then solve the Dirichlet problem of Poisson equation in $D_0$.   

Consider 
\[
F(z,\zeta)=\prod_{k=0}^{n-1}\frac{\zeta-z_{2k+1}}{\zeta-z_{2k}}=P(z,\zeta)\times\prod_{k=0}^{n-1}\frac{z\sin k\theta-\sin(\alpha+k\theta)}{\bar{z}\sin(\alpha+k\theta)-\sin k\theta},
\]
where
\begin{equation*}
\begin{split}
P(z,\zeta) & =\prod_{k=0}^{n-1} \frac{\bar{z}\zeta\sin(\alpha+k\theta)-(\bar{z}+\zeta)\sin k\theta-\sin(\alpha-k\theta)}{z\zeta \sin k\theta+z\sin(\alpha-k\theta)-\zeta\sin(\alpha+k\theta)+\sin k\theta} \\
& =\frac{\bar{z}\zeta-1}{\zeta-z}\prod_{k=1}^{n-1} \frac{\bar{z}\zeta\sin(\alpha+k\theta)-(\bar{z}+\zeta)\sin k\theta-\sin(\alpha-k\theta)}{ z\zeta\sin k\theta+z\sin(\alpha-k\theta)-\zeta\sin(\alpha+k\theta)+\sin k\theta}.
\end{split}
\end{equation*}
It is obvious that $\log|P(z,\zeta)|^2$ as a function in variable $z$ is harmonic in $D_0\setminus\{\zeta\}$, and $\log|P(z,\zeta)|^2+\log|\zeta-z|^2$ is harmonic in $D_0$. Before showing that $\log|P(z,\zeta)|^2$ vanishes on the boundary $\partial D_0$, firstly we study some properties of $F(z,\zeta)$.

\begin{lemma}\label{lm}
If $\theta=\frac{\pi}{n}$, then 	
\(
\prod_{k=0}^{n-1}\left|\frac{z\sin k\theta-\sin(\alpha+k\theta)}{\bar{z}\sin(\alpha+k\theta)-\sin k\theta}\right|=1
\)
on the boundary of $D_0$.
\end{lemma}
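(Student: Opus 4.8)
The claim is that on $\partial D_0$ the modulus of the ``extra'' factor $Q(z):=\prod_{k=0}^{n-1}\frac{z\sin k\theta-\sin(\alpha+k\theta)}{\bar z\sin(\alpha+k\theta)-\sin k\theta}$ equals $1$. Since $\partial D_0 = C_0\cup C_1$, I would split into these two cases and use the reflection-point identities stated just before the lemma, namely $z_k = z_{2n-k-1}$ when $z\in C_0$ and $z_{2k}=z_{2k+1}$ when $z\in C_1$. The plan is to recognize that $Q(z) = F(z,\zeta)\big/ P(z,\zeta) = \prod_{k=0}^{n-1}\frac{\zeta-z_{2k+1}}{\zeta-z_{2k}}\cdot\frac{1}{P(z,\zeta)}$, but more directly to read off $Q$ as the ratio of the leading-in-$\zeta$ behaviour of the product $\prod(\zeta - z_{2k+1})/(\zeta-z_{2k})$: each reflection point $z_j$ is a Möbius function of $z$ or $\bar z$ with an explicit denominator, and $Q(z)$ is exactly the product of the denominators of the $z_{2k}$ divided by the product of the denominators of the $z_{2k+1}$ (up to the numerators' leading coefficients). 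Concretely, from the displayed formulas $z_{2k} = \frac{-z\sin(\alpha-k\theta)-\sin k\theta}{z\sin k\theta - \sin(\alpha+k\theta)}$ and $z_{2k+1} = \frac{\bar z\sin k\theta + \sin(\alpha-k\theta)}{\bar z\sin(\alpha+k\theta)-\sin k\theta}$, so $Q(z) = \prod_{k=0}^{n-1}\frac{z\sin k\theta - \sin(\alpha+k\theta)}{\bar z\sin(\alpha+k\theta)-\sin k\theta}$ is literally (denominator of $z_{2k}$)/(denominator of $z_{2k+1}$).

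\emph{Case $z\in C_1$ (the unit circle), so $\bar z = 1/z$ and $|z|=1$.} Here I would substitute $\bar z = 1/z$ into $Q$: the $k$-th denominator becomes $\frac1z\sin(\alpha+k\theta)-\sin k\theta = \frac1z\big(\sin(\alpha+k\theta) - z\sin k\theta\big)$, so the $k$-th factor is $z\cdot\frac{z\sin k\theta - \sin(\alpha+k\theta)}{\sin(\alpha+k\theta)-z\sin k\theta} = -z$. Hence $Q(z) = (-z)^n = (-1)^n z^n$, which has modulus $|z|^n = 1$. This case is essentially immediate.

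\emph{Case $z\in C_0$.} This is the case I expect to be the main obstacle. The defining equation of $C_0$ is $z\bar z\sin(\alpha-\theta) + (z+\bar z)\sin\theta - \sin(\alpha+\theta) = 0$; equivalently $z\bar z = \frac{\sin(\alpha+\theta) - (z+\bar z)\sin\theta}{\sin(\alpha-\theta)}$, which is one real quadratic constraint. The cleanest route is to use the reflection relation $z_k = z_{2n-k-1}$ valid on $C_0$, which when $\theta=\pi/n$ gives a pairing among the $2n$ reflection points; applied to the product $F(z,\zeta)=\prod_{k=0}^{n-1}\frac{\zeta-z_{2k+1}}{\zeta-z_{2k}}$ this should force $|F(z,\zeta)|=1$ for all $\zeta$ on... no — more carefully, I would argue that on $C_0$ the list $\{z_1,z_3,\dots,z_{2n-1}\}$ is a permutation of $\{z_0,z_2,\dots,z_{2n-2}\}$, so the product $\prod(\zeta-z_{2k+1})/\prod(\zeta-z_{2k})\equiv 1$ identically in $\zeta$; since this product equals $Q(z)\cdot P(z,\zeta)$ and $P$ is not identically $1$, I instead track the constant: comparing the $\zeta^n$ coefficients of numerator and denominator of $F$ gives $Q(z) = \lim_{\zeta\to\infty}F(z,\zeta) = \prod\frac{\zeta-z_{2k+1}}{\zeta-z_{2k}}\big|_{\zeta=\infty}\cdot Q(z)$... this is circular, so the honest approach is the brute-force one: write $\big|\frac{z\sin k\theta-\sin(\alpha+k\theta)}{\bar z\sin(\alpha+k\theta)-\sin k\theta}\big|^2$, clear denominators using $z\bar z$, $z+\bar z$ expressed via the $C_0$-equation, and show the product over $k=0,\dots,n-1$ telescopes to $1$ using the identity $z_k=z_{2n-k-1}$ on $C_0$ — i.e. the reflection at $C_0$ pairs the factor indexed $k$ with the factor indexed $n-1-k$ (suitably interpreted), each pair contributing modulus $1$.

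\emph{Summary of the strategy.} First, reduce $Q(z)$ to an explicit product of Möbius denominators via the formulas for $z_{2k}, z_{2k+1}$. Second, dispose of $C_1$ by the substitution $\bar z = 1/z$, getting $Q = (-1)^n z^n$. Third, on $C_0$ use $\theta = \pi/n$ together with the reflection identity $z_k = z_{2n-k-1}$ to pair up the $n$ factors of $Q$ into conjugate-reciprocal pairs, each of modulus $1$; the bookkeeping of this pairing — matching index $k$ with $n-1-k$ and checking that the $C_0$ constraint makes each paired factor a unimodular complex number — is the technical heart of the argument. The role of the hypothesis $\theta=\pi/n$ is precisely that it makes $\sin n\theta = 0$ and $\cos n\theta = (-1)^n$, closing the orbit of reflection points so that the two index sets coincide.
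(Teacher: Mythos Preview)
Your $C_1$ argument is correct and essentially the paper's: on the unit circle each factor has modulus $1$ individually (you get $(-z)^n$, the paper just observes numerator $=-($denominator$)\cdot z$ in modulus). The $C_0$ case, however, is not actually proved. You correctly recognise that the $F\equiv 1$ route is circular (the paper uses this lemma to deduce $|P|=1$ from $|F|=1$, not the reverse), but the fallback you offer --- pairing the factor indexed $k$ with the one indexed $n-1-k$ and claiming each pair is unimodular on $C_0$ --- is neither carried out nor obviously true. For instance, the $k=0$ factor equals $-1/\bar z$, whose modulus is $1/|z|\ne 1$ on $C_0$ in general; it would have to be exactly compensated by the $k=n-1$ factor, and verifying that already requires the trigonometric computation you have not done. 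The reflection identity $z_k=z_{2n-k-1}$ equates \emph{values} of the M\"obius images, not the moduli of their denominators, so it does not by itself give the unimodularity of paired factors.

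The paper's argument on $C_0$ is a genuine telescoping, but along $k\mapsto k+1$ rather than a symmetric pairing. Solve the $C_0$ equation for $\bar z$ to get $\bar z=\dfrac{-z\sin\theta+\sin(\alpha+\theta)}{z\sin(\alpha-\theta)+\sin\theta}$, and substitute into the $k$-th denominator. A short product-to-sum simplification yields
\[
\bar z\sin(\alpha+k\theta)-\sin k\theta \;=\; \frac{-\sin\alpha}{\,z\sin(\alpha-\theta)+\sin\theta\,}\bigl(z\sin(k{+}1)\theta-\sin(\alpha+(k{+}1)\theta)\bigr).
\]
Since $|z\sin(\alpha-\theta)+\sin\theta|=\sin\alpha$ on $C_0$, the $k$-th factor has modulus $|a_k|/|a_{k+1}|$ with $a_k:=z\sin k\theta-\sin(\alpha+k\theta)$, and the product telescopes to $|a_0|/|a_n|=|\sin\alpha|/|z\sin\pi-\sin(\alpha+\pi)|=1$. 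This last step is exactly where $\theta=\pi/n$ enters. Your write-up mentions ``telescopes'' once but then commits to the pairing picture; the missing idea is the substitution of $\bar z$ in terms of $z$, which converts the mixed $z,\bar z$ product into a pure-$z$ product whose consecutive factors cancel.
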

\begin{proof}
	If $z\in C_0$, $z\bar{z}\sin(\alpha-\theta)+z\sin\theta+\bar{z}\sin\theta-\sin(\alpha+\theta)=0 $, then $|z\sin(\alpha-\theta)+\sin\theta|=\sin\alpha$. Substituting $\bar{z}$ by $\frac{-z\sin\theta+\sin(\alpha+\theta)}{z\sin(\alpha-\theta)+\sin\theta}$, we get
\begin{equation*}	
\begin{split}
&\prod_{k=0}^{n-1}\left|\frac{z\sin k\theta-\sin(\alpha+k\theta)}{\bar{z}\sin(\alpha+k\theta)-\sin k\theta}\right|\\
=&\prod_{k=0}^{n-1}\left|\frac{z\sin k\theta-\sin(\alpha+k\theta)}{\frac{-z\sin\theta+\sin(\alpha+\theta)}{z\sin(\alpha-\theta)+\sin\theta}\sin(\alpha+k\theta)-\sin k\theta}\right|\\
=& \prod_{k=0}^{n-1}\left|\frac{z\sin k\theta-\sin(\alpha+k\theta)}{z\sin(k+1)\theta-\sin(\alpha+(k+1)\theta)}\frac{z\sin(\alpha-\theta)+\sin\theta}{\sin\alpha}\right|\\
=&\left|\frac{-\sin\alpha}{z\sin n\theta-\sin(\alpha+n\theta)}\right|\\
=&\left|\frac{-\sin\alpha}{z\sin\pi-\sin(\alpha+\pi)}\right|\\
=&1.
\end{split}
\end{equation*}
In the other case, $z\in C_1$, $z\bar{z}=1$, then
\begin{equation*}
	\prod_{k=0}^{n-1}\left|\frac{z\sin k\theta-\sin(\alpha+k\theta)}{\bar{z}\sin(\alpha+k\theta)-\sin k\theta}\right|=\prod_{k=0}^{n-1}\left|\frac{z\sin k\theta-\sin(\alpha+k\theta)}{\sin(\alpha+k\theta)-z\sin k\theta}\right|=1.
\end{equation*}
The conclusion holds for both cases.
\end{proof}

Noticing that $\lim\limits_{z\to \partial D_0}F(z,\zeta)=1$, it implies immediately $\lim\limits_{z\to\partial D_0}|P(z,\zeta)|=1$ via Lemma \ref{lm}. Then $\log|P(z,\zeta)|^2$ vanishes on the boundary. Therefore we have verified the harmonic Green function of $D_0$,  claimed as follows.
\begin{theorem}
 \(G_1(z,\zeta)=\log|P(z,\zeta)|^2\) is the  harmonic Green function of the domain $D_0$.
	
\end{theorem}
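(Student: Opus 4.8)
The plan is to verify the three defining properties of a harmonic Green function for $D_0$: (i) $G_1(z,\zeta)$ is harmonic in $z$ on $D_0\setminus\{\zeta\}$; (ii) $G_1(z,\zeta)+\log|\zeta-z|^2$ extends harmonically across $\zeta$, so that $G_1$ has the correct logarithmic singularity at the pole; and (iii) $G_1(z,\zeta)=0$ for $z\in\partial D_0$. All three are essentially already assembled in the discussion preceding the statement, so the proof is a matter of collecting them.

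First I would recall that $P(z,\zeta)$ is, up to the nonvanishing factor $\dfrac{\bar z\zeta-1}{\zeta-z}$, a finite product of M\"obius-type expressions in $z$ and $\zeta$ whose only zero or pole in $z$ inside $D_0$ is at $z=\zeta$ (coming from the $\zeta-z$ in the denominator). Hence $\log|P(z,\zeta)|^2$ is harmonic in $D_0\setminus\{\zeta\}$, giving (i), and since $\log\bigl|\tfrac{\bar z\zeta-1}{\zeta-z}\bigr|^2+\log|\zeta-z|^2=\log|\bar z\zeta-1|^2$ is harmonic near $\zeta$ (as $\zeta\in D_0$ forces $\bar z\zeta-1\neq0$ there), the combination $\log|P(z,\zeta)|^2+\log|\zeta-z|^2$ is harmonic on all of $D_0$; this is (ii). One should also check that the remaining factors $z_{2k}$ for $k=1,\dots,n-1$ — i.e. the reflection points in the other domains $D_{2k}$ — do not lie in $\overline{D_0}$, so no spurious singularities or boundary zeros are introduced; this follows from the parqueting property established in Section 2, since each $z_{2k}\in D_{2k}$ and the $D_k$ have disjoint interiors.

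For (iii) I would argue exactly as the text indicates: from the definition, $F(z,\zeta)=\displaystyle\prod_{k=0}^{n-1}\frac{\zeta-z_{2k+1}}{\zeta-z_{2k}}$, and as $z\to\partial D_0$ the reflection points pair up — $z_{2k}=z_{2k+1}$ when $z\in C_1$ and $z_k=z_{2n-k-1}$ when $z\in C_0$ — so the numerator and denominator of $F(z,\zeta)$ match up to reordering, whence $\lim_{z\to\partial D_0}F(z,\zeta)=1$. Writing $F(z,\zeta)=P(z,\zeta)\cdot\displaystyle\prod_{k=0}^{n-1}\frac{z\sin k\theta-\sin(\alpha+k\theta)}{\bar z\sin(\alpha+k\theta)-\sin k\theta}$ and invoking Lemma~\ref{lm}, which says the second factor has modulus $1$ on $\partial D_0$ when $\theta=\pi/n$, we get $|P(z,\zeta)|\to1$ as $z\to\partial D_0$, hence $G_1(z,\zeta)=\log|P(z,\zeta)|^2\to0$ on the boundary.

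The main obstacle is the boundary vanishing, and more precisely the bookkeeping behind $\lim_{z\to\partial D_0}F(z,\zeta)=1$: one must carefully match the list of reflection points $\{z_0,\dots,z_{2n-1}\}$ against its ``shifted'' copy using the two pairing identities, separately for $z\in C_0$ and $z\in C_1$, and confirm that the correspondence is a genuine bijection of the index sets appearing in the product (so that no residual factor survives). Everything else — harmonicity, the singularity structure, and Lemma~\ref{lm} — is either immediate or already proved, so once the pairing is pinned down the theorem follows.
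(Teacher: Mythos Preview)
Your proposal is correct and follows essentially the same route as the paper: the paper assembles exactly the same three ingredients---harmonicity of $\log|P|^2$ on $D_0\setminus\{\zeta\}$, harmonicity of $\log|P|^2+\log|\zeta-z|^2$ on $D_0$, and boundary vanishing via $\lim_{z\to\partial D_0}F(z,\zeta)=1$ together with Lemma~\ref{lm}---in the discussion immediately preceding the theorem, treating the statement as a summary rather than giving a separate proof. You are in fact more explicit than the paper on two points it leaves implicit: the index bijection that makes $F\to1$ on $C_0$, and the observation that the reflected poles $z_{2k}$, $k\ge1$, stay outside $\overline{D_0}$ by the parqueting.
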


Next we discuss the Poisson kernel for $D_0$. Let $p(z,\zeta)=-\frac{1}{2}\partial_{\nu_\zeta}G_1(z,\zeta)$ be the Poisson kernel for $D_0$, where $z\in D_0$, $\zeta\in\partial D_0$. We calculate $p(z,\zeta)$ directly. 
\begin{equation*}
\begin{split}
\partial_{\zeta}G_1(z,\zeta)=&-\sum_{k=0}^{n-1}\frac{z\sin{k\theta}-\sin(\alpha+k\theta)}{z\zeta\sin{k\theta}+z\sin(\alpha-k\theta)-\zeta\sin(\alpha+k\theta)+\sin{k\theta}}\\
&+\sum_{k=0}^{n-1}\frac{\bar{z}\sin(\alpha+k\theta)-\sin{k\theta}}{\bar{z}\zeta\sin(\alpha+k\theta)-\bar{z}\sin{k\theta}-\zeta\sin{k\theta}-\sin(\alpha-k\theta)}.
\end{split}
\end{equation*}
On the boundary $C_0\subset\{\zeta\bar{\zeta}\sin(\alpha-\theta)+\zeta\sin\theta+\bar{\zeta}\sin\theta-\sin(\alpha+\theta)=0\}$, the outward normal vector $\nu_\zeta$ is
\[
-\frac{\zeta+\frac{\sin\theta}{\sin(\alpha-\theta)}}{\frac{\sin\alpha}{\sin(\alpha-\theta)}},
\]
while the outward normal derivative is
\[
\partial_{\nu_\zeta}=-\left(\frac{\zeta\sin(\alpha-\theta)+\sin\theta}{\sin\alpha}\partial_\zeta+\frac{\bar{\zeta}\sin(\alpha-\theta)+\sin\theta}{\sin\alpha}\partial_{\bar{\zeta}}\right).
\]
(When $\alpha=\theta$, $C_0$ is just a line segment, in this case $\nu_\zeta=-1$ and $\partial_{\nu_\zeta}=-\partial_\zeta-\partial_{\bar{\zeta}}$.)
Obviously
\begin{equation*}
\begin{split}
&\frac{\zeta\sin(\alpha-\theta)+\sin\theta}{\sin\alpha}\frac{z\sin{k\theta}-\sin(\alpha+k\theta)}{z\zeta \sin{k\theta}+z\sin(\alpha-k\theta)-\zeta\sin(\alpha+k\theta)+\sin{k\theta}}\\
=&\frac{\sin(\alpha-\theta)}{\sin\alpha}-\frac{z\sin(\alpha-(k+1)\theta)+\sin{(k+1)\theta}}{z\zeta \sin{k\theta}+z\sin(\alpha-k\theta)-\zeta\sin(\alpha+k\theta)+\sin{k\theta}}.
\end{split}
\end{equation*}
Replacing $\zeta$ by $\frac{-\bar{\zeta}\sin\theta+\sin(\alpha+\theta)}{\overline{\zeta}\sin(\alpha-\theta)+\sin\theta}$ in 
\[
\frac{\bar{z}\sin(\alpha+k\theta)-\sin{k\theta}}{\bar{z}\zeta\sin(\alpha+k\theta)-(\bar{z}+\zeta)\sin{k\theta}-\sin(\alpha-k\theta)}
\]
gives 
\begin{equation*}
\begin{split}
&-\frac{\bar{z}\sin(\alpha+k\theta)-\sin{k\theta}}{\bar{z}\bar{\zeta}\sin{(k+1)\theta}-\bar{z}\sin(\alpha+(k+1)\theta)+\bar{\zeta}\sin(\alpha-(k+1)\theta)+\sin{(k+1)\theta}}\\
&\times\frac{\bar{\zeta}\sin(\alpha-\theta)+\sin\theta}{\sin\alpha},
\end{split}
\end{equation*}
which implies that
\begin{align*}
\begin{split}
&\frac{\zeta\sin(\alpha-\theta)+\sin\theta}{\sin\alpha}\frac{\bar{z}\sin(\alpha+k\theta)-\sin{k\theta}}{\bar{z}\zeta \sin(\alpha+k\theta)-(\bar{z}+\zeta)\sin{k\theta}-\sin(\alpha-k\theta)}\\
=&-\frac{\bar{z}\sin(\alpha+k\theta)-\sin{k\theta}}{\bar{z}\bar{\zeta}\sin{(k+1)\theta}-\bar{z}\sin(\alpha+(k+1)\theta)+\bar{\zeta}\sin(\alpha-(k+1)\theta)+\sin{(k+1)\theta}}\\
&\times\frac{|\zeta\sin(\alpha-\theta)+\sin\theta|^2}{\sin^2\alpha}\\
=&-\frac{\bar{z}\sin(\alpha+k\theta)-\sin{k\theta}}{\bar{z}\bar{\zeta}\sin{(k+1)\theta}-\bar{z}\sin(\alpha+(k+1)\theta)+\bar{\zeta}\sin(\alpha-(k+1)\theta)+\sin{(k+1)\theta}}.
\end{split}
\end{align*}
Let $l:=n-k-1$, since $n\theta=\pi$, we have
\begin{align*}
\begin{split}
&\frac{\zeta\sin(\alpha-\theta)+\sin\theta}{\sin\alpha}\frac{\bar{z}\sin(\alpha+k\theta)-\sin{k\theta}}{\bar{z}\zeta \sin(\alpha+k\theta)-(\bar{z}+\zeta)\sin{k\theta}-\sin(\alpha-k\theta)}\\
=&\frac{\bar{z}\sin(\alpha-(l+1)\theta)+\sin{(l+1)\theta}}{\bar{z}\bar{\zeta}\sin{l\theta}+\bar{z}\sin(\alpha-l\theta)-\bar{\zeta}\sin(\alpha+l\theta)+\sin{l\theta}}.
\end{split}
\end{align*}
Then 
\begin{equation*}
\begin{split}
&\frac{\zeta\sin(\alpha-\theta)+\sin\theta}{\sin\alpha}\partial_{\zeta}G_1(z,\zeta)\\
=&-\frac{n\sin(\alpha-\theta)}{\sin\alpha}+2\sum_{k=0}^{n-1}\mathrm{Re}\left(\frac{z\sin(\alpha-(k+1)\theta)+\sin{(k+1)\theta}}{z\zeta \sin{k\theta}+z\sin(\alpha-k\theta)-\zeta\sin(\alpha+k\theta)+\sin{k\theta}}\right).
\end{split}
\end{equation*}
Thus in the case of $\zeta\in C_0$, $z\in D_0$, the Poisson kernel
\begin{equation*}
\begin{split}
&p(z,\zeta)\\
=&-\frac{1}{2}\partial_{\nu_\zeta}G_1(z,\zeta)\\
=&\mathrm{Re}\left(\frac{\zeta\sin(\alpha-\theta)+\sin\theta}{\sin\alpha}\partial_{\zeta}G_1(z,\zeta)\right)\\
=&-\frac{n\sin(\alpha-\theta)}{\sin\alpha}+2\sum_{k=0}^{n-1}\mathrm{Re}\left(\frac{z\sin(\alpha-(k+1)\theta)+\sin{(k+1)\theta}}{z\zeta \sin{k\theta}+z\sin(\alpha-k\theta)-\zeta\sin(\alpha+k\theta)+\sin{k\theta}}\right).
\end{split}
\end{equation*}
On the other boundary $C_1$, the outward normal derivative is $\partial_{\nu_\zeta}=\zeta\partial_{\zeta}+\bar{\zeta}\partial_{\bar{\zeta}}$.
\begin{equation*}
\begin{split}
&\zeta\partial_{\zeta}G_1(z,\zeta)\\
=&-\sum_{k=0}^{n-1}\frac{z\zeta\sin{k\theta}-\zeta\sin(\alpha+k\theta)}{z\zeta\sin{k\theta}+z\sin(\alpha-k\theta)-\zeta\sin(\alpha+k\theta)+\sin{k\theta}}\\
&+\sum_{k=0}^{n-1}\frac{\bar{z}\zeta\sin(\alpha+k\theta)-\zeta\sin{k\theta}}{\bar{z}\zeta\sin(\alpha+k\theta)-\bar{z}\sin{k\theta}-\zeta\sin{k\theta}-\sin(\alpha-k\theta)}\\
\overset{\zeta\bar{\zeta}=1}{=\joinrel=\joinrel=}&\sum_{k=0}^{n-1}\left(-1+\frac{z\sin(\alpha-k\theta)+\sin{k\theta}}{z\zeta\sin{k\theta}+z\sin(\alpha-k\theta)-\zeta\sin(\alpha+k\theta)+\sin{k\theta}}\right)\\
&+\sum_{k=0}^{n-1}\frac{\bar{z}\sin(\alpha+k\theta)-\sin{k\theta}}{\bar{z}\sin(\alpha+k\theta)-\bar{z}\bar{\zeta}\sin{k\theta}-\sin{k\theta}-\bar{\zeta}\sin(\alpha-k\theta)}\\
\overset{l:=n-k}{=\joinrel=\joinrel=\joinrel=}&-n+\sum_{k=0}^{n-1}\frac{z\sin(\alpha-k\theta)+\sin{k\theta}}{z\zeta\sin{k\theta}+z\sin(\alpha-k\theta)-\zeta\sin(\alpha+k\theta)+\sin{k\theta}}\\
&+\sum_{l=1}^{n}\frac{\bar{z}\sin(\alpha-l\theta)+\sin{l\theta}}{\bar{z}\bar{\zeta}\sin{l\theta}+\bar{z}\sin(\alpha-l\theta)-\bar{\zeta}\sin(\alpha+l\theta)+\sin{l\theta}}\\
=&-n+2\sum_{k=0}^{n-1}\mathrm{Re}\left(\frac{z\sin(\alpha-k\theta)+\sin{k\theta}}{z\zeta\sin{k\theta}+z\sin(\alpha-k\theta)-\zeta\sin(\alpha+k\theta)+\sin{k\theta}}\right).
\end{split}
\end{equation*}
Thus in the case of $\zeta\in C_1$, $z\in D_0$, the Poisson kernel 
\begin{equation*}
\begin{split}
&p(z,\zeta)\\
=&-\mathrm{Re}(\zeta\partial_{\nu_\zeta}G_1(z,\zeta))\\
=&n-2\sum_{k=0}^{n-1}\mathrm{Re}\left(\frac{z\sin(\alpha-k\theta)+\sin{k\theta}}{z\zeta\sin{k\theta}+z\sin(\alpha-k\theta)-\zeta\sin(\alpha+k\theta)+\sin{k\theta}}\right).
\end{split}
\end{equation*}

\noindent{\bf Remark 1. }The definition of $p(z,\zeta)$ can be extended for $z\in\partial D_0$. When $z\in\partial D_0,$  $\partial_{\zeta}G_1(z,\zeta)=0$ because of $G_1(z,\zeta)=0$. Therefore if $z,\zeta\in \partial D_0$ and $z\neq\zeta$, then $p(z,\zeta)=0$.
 
 Further investigation about the boundary behavior of the Poisson kernel is necessary. Let $g_0(z,\zeta)$ be the Green function for the domain \[\{\zeta\bar{\zeta}\sin(\alpha-\theta)+\zeta\sin\theta+\bar{\zeta}\sin\theta-\sin(\alpha+\theta)\ge0\},\]
and $p_0(z,\zeta)$ be the corresponding Poisson kernel. It is easy to check that
 \[
 g_0(z,\zeta)=\log\left|\frac{\bar{z}\zeta\sin(\alpha-\theta)+\bar{z}\sin\theta+\zeta\sin\theta-\sin(\alpha+\theta)}{(z-\zeta)\sin\alpha}\right|^2,
 \]
\[
p_0(z,\zeta)=-\frac{1}{2}\partial_{\nu_\zeta}g_0(z,\zeta)=-\frac{\sin(\alpha-\theta)}{\sin\alpha}+2\,\mathrm{Re}\left(\frac{z\sin(\alpha-\theta)+\sin\theta}{(z-\zeta)\sin\alpha}\right).
\]
Denote $g_1(z,\zeta)$ and $p_1(z,\zeta)$ respectively as the Green function and the Poisson kernel for the unit disc $\{|\zeta|\le1\}$. It is well known that
\[
g_1(z,\zeta)=\log\left|\frac{\bar{z}\zeta-1}{z-\zeta}\right|^2,
\]
\begin{align*}
p_1(z,\zeta)=-\frac{1}{2}\partial_{\nu_\zeta}g_1(z,\zeta)=1-2\,\mathrm{Re}\left(\frac{z}{z-\zeta}\right).
\end{align*}
With these notations, an boundary property of $p_(z,\zeta)$ is shown in next lemma.
\begin{lemma}\label{pklem}
      $\lim\limits_{z\to C_0}p(z,\zeta)=\lim\limits_{z\to C_0}p_0(z,\zeta)$ if $\zeta\in C_0$, while $\lim\limits_{z\to C_1}p(z,\zeta)=\lim\limits_{z\to C_1}p_1(z,\zeta)$ if $\zeta\in C_1$.
\end{lemma}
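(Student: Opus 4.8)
The plan is to recast the statement about Poisson kernels as a statement about the normal derivative of a harmonic function that vanishes on the relevant arc, and then to use Schwarz reflection. Put
\[
h_0(z,\zeta):=G_1(z,\zeta)-g_0(z,\zeta),\qquad h_1(z,\zeta):=G_1(z,\zeta)-g_1(z,\zeta).
\]
Since $G_1$, $g_0$, $g_1$ all have the same logarithmic principal part $-\log|z-\zeta|^2$ on the diagonal, the functions $h_0,h_1$ are harmonic in $z$ throughout $D_0$; they are symmetric in $(z,\zeta)$, as harmonic Green functions are. Because $D_0\subset\Omega_0:=\{\zeta\bar\zeta\sin(\alpha-\theta)+\zeta\sin\theta+\bar\zeta\sin\theta-\sin(\alpha+\theta)\ge0\}$, whose boundary is the full circle carrying $C_0$, and both $G_1$ and $g_0$ vanish on that circle, $h_0$ vanishes whenever $z$ lies on $C_0$, hence—by symmetry—whenever $\zeta$ lies on $C_0$; likewise $h_1$ vanishes whenever $z$ or $\zeta$ lies on $C_1\subset\{|z|=1\}$. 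Near the relative interior of $C_0$ the outward normals of $D_0$ and of $\Omega_0$ coincide, and near that of $C_1$ those of $D_0$ and of the unit disc coincide, so $p-p_0=-\tfrac12\partial_{\nu_\zeta}h_0$ for $\zeta\in C_0$ and $p-p_1=-\tfrac12\partial_{\nu_\zeta}h_1$ for $\zeta\in C_1$. The lemma is therefore reduced to
\[
\lim_{z\to C_0}\partial_{\nu_\zeta}h_0(z,\zeta)=0\ \ (\zeta\in C_0),\qquad
\lim_{z\to C_1}\partial_{\nu_\zeta}h_1(z,\zeta)=0\ \ (\zeta\in C_1).
\]

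I carry out the $C_0$ case; the $C_1$ case is identical, with the circle reflection $R_{C_1}\colon z\mapsto 1/\bar z$ replacing $R_{C_0}$. Fix $\zeta$ in the relative interior of the arc $C_0$. For each fixed $w$, the function $\zeta'\mapsto h_0(w,\zeta')$ is harmonic on $D_0$, continuous up to the open arc $C_0$, and vanishes there, so, $C_0$ being a circular hence real-analytic arc, it extends by Schwarz reflection to a harmonic function—odd under $R_{C_0}$—on the fixed ``doubled'' domain $\widetilde D_0:=D_0\cup(\operatorname{int}C_0)\cup R_{C_0}(D_0)$; the same holds in the first variable, and by uniqueness of harmonic continuation the two extensions are compatible, giving $h_0(R_{C_0}z,\zeta')=-h_0(z,\zeta')$ for $z,\zeta'$ near $\operatorname{int}C_0$. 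Now $u(z):=\partial_{\nu_\zeta}h_0(z,\zeta)$ is a directional derivative in $\zeta'$ along the fixed normal at $\zeta$; writing it as $\lim_{t\to0}t^{-1}h_0(z,\zeta+t\nu_\zeta)$, a locally uniform limit of functions harmonic in $z$ on $\widetilde D_0$, we see that $u$ is harmonic in $z$ on $\widetilde D_0$. For $z_1\in\operatorname{int}C_0$ one has $h_0(z_1,\cdot)\equiv0$ (by symmetry $h_0(z_1,\zeta')=h_0(\zeta',z_1)=0$ for $\zeta'\in D_0$, and the odd extension of $0$ is $0$), hence $u(z_1)=\partial_{\nu_\zeta}h_0(z_1,\zeta)=0$; thus $u$ vanishes on $\operatorname{int}C_0$ and, being harmonic across it, $\lim_{z\to C_0}u(z)=0$. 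This argument also disposes of the delicate case in which the limit point of $z$ equals $\zeta$ itself—where $p$ and $p_0$ each blow up while $p-p_0=-\tfrac12u$ still tends to $0$—because $u$ remains harmonic in $z$ in a full neighborhood of $\zeta$ in $\widetilde D_0$.

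The only point requiring genuine care is the reflection bookkeeping: that $h_0$ extends harmonically across the \emph{open} arc $C_0$ in each variable separately, that the two one-variable extensions patch into the joint identity $h_0(R_{C_0}z,\zeta')=-h_0(z,\zeta')$, and that $\partial_{\nu_\zeta}$ may be interchanged with $\Delta_z$ and with the limit $z\to C_0$. All of this is routine once one works on the \emph{fixed} doubled domain $\widetilde D_0$ and stays away from the two corners $e^{\pm i\alpha}$ where $C_0$ meets $C_1$.

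Alternatively the lemma can be checked directly from the explicit Poisson kernels of this section. For $\zeta\in C_0$ the $k=0$ summand of $p(z,\zeta)$ is already $2\operatorname{Re}\!\big((z\sin(\alpha-\theta)+\sin\theta)/((z-\zeta)\sin\alpha)\big)$, i.e.\ the nonconstant part of $p_0(z,\zeta)$, and for $1\le k\le n-1$ the $k$-th denominator does not vanish near $z=\zeta\in\operatorname{int}C_0$, so $p-p_0$ extends continuously to $C_0$ there; when the limit point differs from $\zeta$ both kernels vanish by Remark~1 and by the vanishing of $p_0$ at distinct boundary points, while when it equals $\zeta$ one evaluates at $z=\zeta$, eliminates $\bar z$ via the defining relation of $C_0$, and uses $\theta=\pi/n$ together with $\sum_{k=1}^{n-1}\cot(k\pi/n)=0$ to collapse $-\tfrac{n\sin(\alpha-\theta)}{\sin\alpha}+2\sum_{k=1}^{n-1}(\cdots)$ down to $-\tfrac{\sin(\alpha-\theta)}{\sin\alpha}$; the $C_1$ case runs the same way with $\zeta\bar\zeta=1$ and with the $k=0$ summand reproducing $1-2\operatorname{Re}(z/(z-\zeta))=p_1$.
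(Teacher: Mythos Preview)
Your Schwarz--reflection argument is correct and proves the lemma, but it takes a genuinely different route from the paper's.  The paper proceeds by direct computation: using the explicit expression for $\partial_\zeta G_1(z,\zeta)$ derived just before the lemma together with the explicit $\partial_\zeta g_0$ and $\partial_\zeta g_1$, it simply substitutes the boundary relation for $z$ (namely $z=\frac{-\bar z\sin\theta+\sin(\alpha+\theta)}{\bar z\sin(\alpha-\theta)+\sin\theta}$ on $C_0$, respectively $\bar z=1/z$ on $C_1$) into $\partial_\zeta G_1-\partial_\zeta g_j$ and observes that the resulting sums telescope to $0$ identically in $\zeta$; no reflection, no harmonic extension, no interchange of limits.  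Your approach is more conceptual and would transfer to any domain admitting a circle reflection across a boundary arc without needing closed-form kernels, but here those formulas are already available and the paper's few-line telescoping is considerably shorter than the reflection bookkeeping you carry out.  Your closing ``alternative'' paragraph is in spirit closer to what the paper does, except that the paper never singles out the diagonal case $z=\zeta$ and never invokes $\sum_{k=1}^{n-1}\cot(k\pi/n)=0$: the telescoping handles all $z\in C_j$ at once.
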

\begin{proof}
If $z$ goes to $C_0$, replacing $z$ by $\frac{-\bar{z}\sin\theta+\sin(\alpha+\theta)}{\bar{z}\sin(\alpha-\theta)+\sin\theta}$, we have
\begin{equation*}
\begin{split}
&\partial_{\zeta}G_1(z,\zeta)-\partial_{\zeta}g_0(z,\zeta)\\
=&-\sum_{k=1}^{n-1}\frac{\bar{z}\sin(\alpha+(k-1)\theta)-\sin{(k-1)\theta}}{\bar{z}\zeta\sin(\alpha+(k-1)\theta)-\bar{z}\sin{(k-1)\theta}-\zeta\sin{(k-1)\theta}-\sin(\alpha-(k-1)\theta)}\\
&+\sum_{k=0}^{n-2}\frac{\bar{z}\sin(\alpha+k\theta)-\sin{k\theta}}{\bar{z}\zeta\sin(\alpha+k\theta)-\bar{z}\sin{k\theta}-\zeta\sin{k\theta}-\sin(\alpha-k\theta)}\\
=&0.
\end{split}
\end{equation*}
Hence $\partial_{\nu_\zeta}G_1(z,\zeta)-\partial_{\nu_\zeta}g_0(z,\zeta)=0$ when $z$ goes to $C_0$.
Likewise, if $z$ goes to $C_1$,
 substituting $\bar{z}$ by $1/z$, then we have
\[
\begin{split}
&\partial_{\zeta}G_1(z,\zeta)-\partial_{\zeta}g_1(z,\zeta)\\
=&-\sum_{k=1}^{n-1}\frac{z\sin{k\theta}-\sin(\alpha+k\theta)}{z\zeta\sin{k\theta}+z\sin(\alpha-k\theta)-\zeta\sin(\alpha+k\theta)+\sin{k\theta}}\\
&+\sum_{k=1}^{n-1}\frac{\bar{z}\sin(\alpha+k\theta)-\sin{k\theta}}{\bar{z}\zeta\sin(\alpha+k\theta)-\bar{z}\sin{k\theta}-\zeta\sin{k\theta}-\sin(\alpha-k\theta)}\\
=&0.
\end{split}
\]
Hence $\partial_{\nu_\zeta}G_1(z,\zeta)-\partial_{\nu_\zeta}g_1(z,\zeta)=0$ when $z$ goes to $C_1$.
 
By the above discussion, we deduce that, for $\zeta\in C_0$
\[
\lim\limits_{z\to C_0}p(z,\zeta)=\lim\limits_{z\to C_0}p_0(z,\zeta),
\]
while for $\zeta\in C_1$
\[
\lim\limits_{z\to C_1}p(z,\zeta)=\lim\limits_{z\to C_1}p_1(z,\zeta).
\]
\end{proof}
Then the following result is obtained on the basis of  Remark 1 and Lemma \ref{pklem}.
\begin{theorem}\label{pkthm}
	For $\gamma\in \mathrm{C}(\partial D_0; \mathbb{C})$ the boundary behavior 
	\[
	\lim\limits_{z\to \zeta} \frac{1}{2\pi}\int_{\partial D_0}\gamma(\tilde{\zeta})p(z,\tilde{\zeta})\mathrm{d}s_{\tilde\zeta}=\gamma(\zeta)
	\]
	for any $\zeta\in\partial D_0$ holds.
\end{theorem}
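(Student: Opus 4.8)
The statement is the classical assertion that the Poisson integral recovers continuous boundary data, and I would prove it through the standard ``approximate identity'' scheme. It suffices to verify three properties of the kernel: (i) $p(z,\tilde\zeta)\ge 0$ for all $z\in D_0$ and $\tilde\zeta\in\partial D_0$; (ii) $\frac{1}{2\pi}\int_{\partial D_0}p(z,\tilde\zeta)\,\mathrm{d}s_{\tilde\zeta}=1$ for every $z\in D_0$; and (iii) for each $\zeta_0\in\partial D_0$ and each $\delta>0$, $\ \sup\{\,p(z,\tilde\zeta)\,:\,\tilde\zeta\in\partial D_0,\ |\tilde\zeta-\zeta_0|\ge\delta\,\}\to 0$ as $z\to\zeta_0$. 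Given these, fix $\zeta_0\in\partial D_0$ and $\varepsilon>0$, choose $\delta>0$ with $|\gamma(\tilde\zeta)-\gamma(\zeta_0)|<\varepsilon$ whenever $\tilde\zeta\in\partial D_0$ and $|\tilde\zeta-\zeta_0|<\delta$, and write, using (ii),
\[
\Bigl|\tfrac{1}{2\pi}\!\int_{\partial D_0}\!\gamma(\tilde\zeta)p(z,\tilde\zeta)\,\mathrm{d}s_{\tilde\zeta}-\gamma(\zeta_0)\Bigr|\le\tfrac{1}{2\pi}\!\int_{\partial D_0}\!|\gamma(\tilde\zeta)-\gamma(\zeta_0)|\,p(z,\tilde\zeta)\,\mathrm{d}s_{\tilde\zeta}.
\]
By (i)--(ii) the part of the last integral over $|\tilde\zeta-\zeta_0|<\delta$ is at most $\varepsilon$, and by (iii) the part over $|\tilde\zeta-\zeta_0|\ge\delta$ is at most $2\|\gamma\|_\infty\cdot\frac{1}{2\pi}\int_{|\tilde\zeta-\zeta_0|\ge\delta}p(z,\tilde\zeta)\,\mathrm{d}s_{\tilde\zeta}\to 0$ as $z\to\zeta_0$. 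Taking $\limsup_{z\to\zeta_0}$ and then $\varepsilon\to0$ gives the theorem, so everything reduces to (i), (ii), (iii).

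For (i): for fixed $z\in D_0$ the function $\zeta\mapsto G_1(z,\zeta)=\log|P(z,\zeta)|^2$ is harmonic on $D_0$ except at $\zeta=z$, where it tends to $+\infty$, and it vanishes on $\partial D_0$ because $|P(z,\cdot)|\equiv 1$ there (the counterpart, under the symmetry $G_1(z,\zeta)=G_1(\zeta,z)$ of the harmonic Green function, of the boundary identity derived above from Lemma \ref{lm}). The minimum principle applied on $\overline{D_0}\setminus B_\varrho(z)$ for small $\varrho$ then forces $G_1(z,\cdot)\ge 0$ on $D_0$; since it is zero on $\partial D_0$, its outward normal derivative there is $\le 0$, so $p(z,\tilde\zeta)=-\tfrac12\partial_{\nu_{\tilde\zeta}}G_1(z,\tilde\zeta)\ge 0$. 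For (ii): apply Green's second identity to the harmonic pair $G_1(z,\cdot)$ and the constant $1$ on $D_0\setminus B_\varrho(z)$ and let $\varrho\to0$; the logarithmic pole contributes $-4\pi$ to $\int_{\partial D_0}\partial_{\nu_{\tilde\zeta}}G_1(z,\tilde\zeta)\,\mathrm{d}s_{\tilde\zeta}$, which gives $\frac{1}{2\pi}\int_{\partial D_0}p(z,\tilde\zeta)\,\mathrm{d}s_{\tilde\zeta}=1$ (equivalently, this is the Green representation formula evaluated at $u\equiv 1$, or it can be read off by integrating the explicit formulas for $p$ over $C_0$ and $C_1$).

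For (iii) I would show that, for $\tilde\zeta_0\in\partial D_0$ with $\tilde\zeta_0\neq\zeta_0$, the explicit rational expressions obtained above for $p(z,\tilde\zeta)$ on $C_0$ and on $C_1$ extend jointly continuously to $z\in\overline{D_0}$ near $\zeta_0$ and $\tilde\zeta$ near $\tilde\zeta_0$, and that their value at $z=\zeta_0$ is $0$ --- the vanishing being precisely Remark 1 (alternatively, via Lemma \ref{pklem}: as $z\to\zeta_0$ along $D_0$ the kernel $p(z,\tilde\zeta)$ coincides in the limit with $p_0(\zeta_0,\tilde\zeta)$ when $\zeta_0\in C_0$, resp.\ with $p_1(\zeta_0,\tilde\zeta)$ when $\zeta_0\in C_1$, and these model kernels manifestly vanish for a boundary point $\zeta_0$ distinct from $\tilde\zeta$). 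Joint continuity on the compact set $\{\,(z,\tilde\zeta):z\in\overline{D_0},\ |z-\zeta_0|\le\delta/2,\ \tilde\zeta\in\partial D_0,\ |\tilde\zeta-\zeta_0|\ge\delta\,\}$ together with the vanishing on the slice $z=\zeta_0$ then yields, by uniform continuity, the uniform decay asserted in (iii).

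The crux --- the step I expect to cost the most --- is that continuous extension of the kernel to $z\in\partial D_0$ used in (iii). Concretely, one must rule out degeneration of the denominators $z\zeta\sin k\theta+z\sin(\alpha-k\theta)-\zeta\sin(\alpha+k\theta)+\sin k\theta$ (and their conjugate analogues) for $z,\zeta\in\partial D_0$ with $z\neq\zeta$; in terms of the reflection points this amounts to showing that no $z_{2k}$ or $z_{2k+1}$ meets $\partial D_0$ when $z\in\partial D_0$, except that all of them collapse onto the two common points $e^{\pm i\alpha}$ when $z$ itself is $e^{\pm i\alpha}$. This is forced by the parqueting of Section 2 --- every $C_k$ passes through $e^{\pm i\alpha}$ and each circle reflection fixes those two points --- and once it is in hand the corner points $\zeta_0=e^{\pm i\alpha}$ need no separate discussion, since the required continuity and vanishing persist there as well (the corner angle $\theta=\pi/n\le\pi$ being a regular configuration in any case). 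If $D_0$ is one of the unbounded cases, one also notes from the explicit formula that $p(z,\tilde\zeta)$ stays bounded as $z\to\infty$, so that $z\mapsto\frac{1}{2\pi}\int_{\partial D_0}\gamma(\tilde\zeta)p(z,\tilde\zeta)\,\mathrm{d}s_{\tilde\zeta}$ is a bounded harmonic function and the argument above is unaffected.
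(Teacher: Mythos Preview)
Your argument is correct. The paper gives no proof of this theorem: it simply declares that the result ``is obtained on the basis of Remark~1 and Lemma~\ref{pklem}.'' The implied route is a direct comparison with the model Poisson kernels --- Lemma~\ref{pklem} makes $p-p_j$ vanish as $z\to C_j$ for $\tilde\zeta\in C_j$, Remark~1 makes $p$ vanish for $\tilde\zeta$ on the other arc, and one then appeals to the classical boundary recovery for the unit disc (resp.\ the disc bounded by $C_0$). Your approximate-identity scheme (i)--(iii) is an equivalent but more self-contained organization, and you even cite the comparison via Lemma~\ref{pklem} as one alternative for verifying~(iii). What your version buys is explicitness: you actually prove positivity and unit mass of $p$, and you name the uniformity in $\tilde\zeta$ required for~(iii) (via joint continuity of the rational kernel and the non-vanishing of the denominators), whereas the paper's one-line reduction leaves all of this, including the passage from the pointwise limits in Lemma~\ref{pklem} to a statement about the boundary integral, to the reader.
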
 

Green representation formula shows that, see \cite{BeBS20},  for any $w(z)\in \mathrm{C}^2(D,\mathbb{C})\cap\mathrm{C}(\bar{D},\mathbb{C})$, 
\[
w(z)=\frac{1}{2\pi}\int_{\partial D_0} w(\zeta)p(z,\zeta)\mathrm{d}s_\zeta-\frac{1}{\pi}\int_{D_0} w_{\zeta\overline{\zeta}}(\zeta)G_1(z,\zeta)\mathrm{d}\xi\mathrm{d}\eta. 
\]
With the properties of Green function and the result of Theorem \ref{pkthm}, this Green representation formula succeeds in giving the  solution of Dirichlet boundary problem for the Poisson equation on the domain $D_0$, as shown in the next theorem.
\begin{theorem}
	The Dirichlet problem $w_{z\bar{z}}=f$ in $D_0$, $w=\gamma$ on $D_0$ for $f\in \mathrm{L}_{p}(D_0;\mathbb{C})$, $p>2$, $\gamma\in \mathrm{C}(\partial D_0,\mathbb{C})$ has a unique solution, which is provided by 
	\[
	w(z)=\frac{1}{2\pi}\int_{\partial D_0} \gamma(\zeta)p(z,\zeta)\mathrm{d}s_\zeta-\frac{1}{\pi}\int_{D_0} f(\zeta)G_1(z,\zeta)\mathrm{d}\xi\mathrm{d}\eta.
	\]
\end{theorem}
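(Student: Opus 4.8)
The plan is to verify that the proposed $w$ satisfies the equation and the boundary condition, and then argue uniqueness separately. First I would check the interior equation: apply $\partial_z\partial_{\bar z}$ to the formula for $w$. The boundary integral $\frac{1}{2\pi}\int_{\partial D_0}\gamma(\zeta)p(z,\zeta)\,ds_\zeta$ is harmonic in $z\in D_0$ because $p(z,\zeta)=-\frac12\partial_{\nu_\zeta}G_1(z,\zeta)$ and $G_1(\cdot,\zeta)=\log|P(\cdot,\zeta)|^2$ is harmonic in $D_0\setminus\{\zeta\}$ (established in Theorem~3.2 and the discussion preceding it), so its normal derivative in $\zeta$ is still harmonic in $z$; hence this term is annihilated by $\partial_z\partial_{\bar z}$. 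For the volume term, I would invoke the standard fact that $-\frac{1}{\pi}\int_{D_0}f(\zeta)G_1(z,\zeta)\,d\xi d\eta$ is a Newtonian-type potential whose $\partial_z\partial_{\bar z}$ equals $f$: writing $G_1(z,\zeta)=\log|z-\zeta|^2+(\text{harmonic in }z)$, and using that $\frac{1}{\pi}\partial_z\partial_{\bar z}\log|z-\zeta|^2=$ the Dirac mass at $\zeta$ (equivalently that $\frac{1}{\pi}\log|z|$ is a fundamental solution for $\partial_z\partial_{\bar z}$), together with $f\in L_p(D_0)$, $p>2$, which guarantees the potential is $C^1$ and the differentiation under the integral sign is legitimate in the distributional/Sobolev sense. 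This yields $w_{z\bar z}=f$ in $D_0$.

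Next I would check the boundary condition $w=\gamma$ on $\partial D_0$. As $z\to\zeta_0\in\partial D_0$, the volume term tends to $0$ because $G_1(z,\zeta)\to 0$ as $z\to\partial D_0$ (Theorem~3.2 and Lemma~3.1) uniformly enough away from the singularity, and an $L_p$ bound on $f$ with $p>2$ controls the contribution near $z=\zeta$ — so this term does not contribute to the boundary value. The boundary term is handled directly by Theorem~3.5: $\lim_{z\to\zeta_0}\frac{1}{2\pi}\int_{\partial D_0}\gamma(\tilde\zeta)p(z,\tilde\zeta)\,ds_{\tilde\zeta}=\gamma(\zeta_0)$ for every $\zeta_0\in\partial D_0$, given $\gamma\in C(\partial D_0;\mathbb C)$. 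Combining the two limits gives $\lim_{z\to\zeta_0}w(z)=\gamma(\zeta_0)$, so $w\in C(\overline{D_0};\mathbb C)$ and attains the prescribed data.

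For uniqueness, I would take two solutions $w_1,w_2$ and set $u=w_1-w_2$; then $u\in C(\overline{D_0};\mathbb C)$, $u_{z\bar z}=0$ in $D_0$, hence $u$ is harmonic in $D_0$, and $u=0$ on $\partial D_0$. The maximum principle for harmonic functions on the bounded domain $D_0$ forces $u\equiv0$, so $w_1=w_2$. (One should note that $D_0$ is bounded in all four cases — or, in the degenerate configurations where it is not, replace the maximum principle argument by the usual one on $\mathbb C_\infty$ after noting $u$ extends continuously; but in the normalized setup $D_0\subset\{|z|\le 1\}$ is bounded, so the elementary maximum principle suffices.)

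The main obstacle I anticipate is the careful justification of the two differentiations/limit-interchanges in the volume-potential term: showing $w_{z\bar z}=f$ rigorously for $f$ only in $L_p$, $p>2$ (rather than smooth), requires the Calderón–Zygmund / Vekua theory of the $T$-operator and the fact that $G_1-\log|z-\zeta|^2$ is harmonic in $z$ and jointly continuous enough in $(z,\zeta)$ near the boundary to make the potential $C^1$ up to $\partial D_0$ with vanishing boundary trace. All of this is standard (and is exactly the framework of the cited Green representation formula from \cite{BeBS20}), so I would state it as such rather than reprove it; the genuinely new input is entirely contained in the explicit $G_1$ and $p$ constructed via the parqueting-reflection principle and already validated in Theorems~3.2 and~3.5.
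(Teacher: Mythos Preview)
Your proposal is correct and follows essentially the same approach the paper takes: the paper does not give a separate proof of this theorem but simply invokes the Green representation formula from \cite{BeBS20} together with the properties of $G_1$ (Theorem~3.2) and the boundary behaviour of $p(z,\zeta)$ (Theorem~3.5), which is precisely the skeleton you flesh out. Your write-up is in fact more detailed than the paper's, which leaves the verification of $w_{z\bar z}=f$, the boundary limit of the area integral, and uniqueness entirely to the cited reference.
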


\section{Harmonic Neumann function of \bf{$D_0$}}

To construct the harmonic Neumann function for $D_0$, we start with a rational function in variables $z$ and $\zeta$, denoted by $H(z,\zeta)=\prod_{k=0}^{n-1}(\zeta-z_{2k})(\zeta-z_{2k+1})$. Multiplying $H(z,\zeta)$ by the product of all the denominators appearing in the $z_k$ terms, it turns out to be a polynomial function in variables $z$ and $\zeta$, denoted by
\begin{equation*}
\begin{split}
 Q(z,\zeta) =& \prod_{k=0}^{n-1}(\bar{z}\zeta\sin(\alpha+k\theta)-(\bar{z}+\zeta)\sin{k\theta}-\sin(\alpha-k\theta))\\
&\times \prod_{k=0}^{n-1}(z\zeta\sin{k\theta}+z\sin(\alpha-k\theta)-\zeta\sin(\alpha+k\theta)+\sin k\theta). 
\end{split}
\end{equation*}
 $|P(z,\zeta)|$ is symmetric in variables $z$ and $\zeta$ based on the fact that Green functions are symmetric. We also see that $|Q(z,\zeta)|$ is symmetric in $z$ and $\zeta$ by comparing $Q(z,\zeta)$ with $P(z,\zeta)$. So $Q(z,\zeta)$ will be a candidate for creating the harmonic Neumann function of $D_0$.
\begin{lemma}\label{nflem1}
	Let $N_1(z,\zeta)=-\log|Q(z,\zeta)|^2$. Then
	\[
	\partial_{\nu_z}N_1(z,\zeta)=
	\begin{cases}
	\frac{2n\sin(\alpha-\theta)}{\sin\alpha},& \mathrm{for}\ z\in C_0, \zeta\in\overline{D_0}\setminus\{z\};\\
	-2n,&\mathrm{for}\  z\in C_1, \zeta\in\overline{D_0}\setminus\{z\}.
	\end{cases}
	\]
\end{lemma}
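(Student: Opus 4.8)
The plan is to compute $\partial_z N_1$ explicitly as a finite sum of simple rational terms, turn the conormal derivative into twice the real part of one such expression, and then on each of the two boundary arcs exploit the incidence relations among the reflection points $z_k$ established in Section~2. Write the two products in $Q$ factorwise as $Q(z,\zeta)=\prod_{k=0}^{n-1}a_k\,b_k$, where $a_k(z,\zeta)=\bar z\zeta\sin(\alpha+k\theta)-(\bar z+\zeta)\sin k\theta-\sin(\alpha-k\theta)$ involves only $\bar z,\zeta$ while $b_k(z,\zeta)=z\zeta\sin k\theta+z\sin(\alpha-k\theta)-\zeta\sin(\alpha+k\theta)+\sin k\theta$ involves only $z,\zeta$. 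Since $N_1=-\log Q-\log\overline Q$ and $\partial_z a_k=0=\partial_z\overline{b_k}$, differentiation gives
\[
\partial_z N_1=-\sum_{k=0}^{n-1}\frac{\bar\zeta\sin(\alpha+k\theta)-\sin k\theta}{\overline{a_k}}-\sum_{k=0}^{n-1}\frac{\zeta\sin k\theta+\sin(\alpha-k\theta)}{b_k}.
\]
Because $N_1$ is real-valued, $\partial_{\bar z}N_1=\overline{\partial_z N_1}$, so, using the conormal operators recorded in Section~3 (now in the variable $z$), one has $\partial_{\nu_z}N_1=2\,\mathrm{Re}(z\,\partial_z N_1)$ for $z\in C_1$ and $\partial_{\nu_z}N_1=-2\,\mathrm{Re}\!\bigl(\tfrac{z\sin(\alpha-\theta)+\sin\theta}{\sin\alpha}\,\partial_z N_1\bigr)$ for $z\in C_0$; note these expressions are independent of $\zeta$, matching the constant value claimed.

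For $z\in C_1$ I would substitute $\bar z=1/z$; a short computation then shows $z\,a_k=-b_k$ there, which is precisely the algebraic restatement of $z_{2k}=z_{2k+1}$ for $z\in C_1$. Splitting off the top-degree-in-$z$ parts, $z\zeta\sin k\theta+z\sin(\alpha-k\theta)=b_k+(\zeta\sin(\alpha+k\theta)-\sin k\theta)$ and likewise for $\overline{a_k}$, each of the $2n$ summands of $z\,\partial_z N_1$ becomes $-1$ plus a remainder, so $z\,\partial_z N_1=-2n-S$ with $S=\sum_k\frac{\bar\zeta\sin k\theta+\sin(\alpha-k\theta)}{\overline{a_k}}+\sum_k\frac{\zeta\sin(\alpha+k\theta)-\sin k\theta}{b_k}$. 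Conjugating the first sum and using $a_k=-\bar z\,b_k$ together with $1/\bar z=z$ once more, every term of $S$ collapses to $\mathrm{Re}(-b_k/b_k)=-1$, hence $\mathrm{Re}(S)=-n$ and $\partial_{\nu_z}N_1=2(-2n-\mathrm{Re}\,S)=-2n$.

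For $z\in C_0$ the relevant boundary facts are $(z\sin(\alpha-\theta)+\sin\theta)(\bar z\sin(\alpha-\theta)+\sin\theta)=\sin^2\alpha$ and $z_{2k}=z_{2n-1-k}$, the latter yielding on $C_0$ the proportionality $a_k=b_{n-1-k}\cdot\dfrac{\bar z\sin(\alpha+k\theta)-\sin k\theta}{z\sin((k+1)\theta)+\sin(\alpha-(k+1)\theta)}$. A two-term partial fraction in $z$ combined with the product-to-sum formulae yields the index-shift identity
\[
\frac{z\sin(\alpha-\theta)+\sin\theta}{\sin\alpha}\cdot\frac{\zeta\sin k\theta+\sin(\alpha-k\theta)}{b_k}=\frac{\sin(\alpha-\theta)}{\sin\alpha}+\frac{\zeta\sin(\alpha+(k-1)\theta)-\sin((k-1)\theta)}{b_k}
\]
and its $\overline{a_k}$-counterpart (numerator $\bar\zeta\sin((k+1)\theta)+\sin(\alpha-(k+1)\theta)$); summing over $k$ gives $\tfrac{z\sin(\alpha-\theta)+\sin\theta}{\sin\alpha}\partial_z N_1=-\tfrac{2n\sin(\alpha-\theta)}{\sin\alpha}-S'$ for a leftover pair of sums $S'$. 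Then I would conjugate the $\overline{a_k}$-sum, reindex $k\mapsto n-1-k$, and insert the proportionality and the modulus relation; the combined numerator over $b_j$ should collapse, on $C_0$, to $-\tfrac{\sin(\alpha-\theta)}{\sin\alpha}\,b_j$ (checked directly for $j=0$ via $\tfrac{\sin^2\alpha}{\bar z\sin(\alpha-\theta)+\sin\theta}=z\sin(\alpha-\theta)+\sin\theta$), whence $\mathrm{Re}(S')=-\tfrac{n\sin(\alpha-\theta)}{\sin\alpha}$ and $\partial_{\nu_z}N_1=\tfrac{4n\sin(\alpha-\theta)}{\sin\alpha}-\tfrac{2n\sin(\alpha-\theta)}{\sin\alpha}=\tfrac{2n\sin(\alpha-\theta)}{\sin\alpha}$. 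The degenerate case $\alpha=\theta$, where $C_0$ is a segment and the conormal is $-\partial_z-\partial_{\bar z}$, is covered by the same computation and returns the stated value $0$.

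The partial-fraction manipulations and the $C_1$ case are routine. The real obstacle is the last step on $C_0$: verifying that after the reindexing and the two boundary substitutions the numerators genuinely reduce to the constant multiple $-\tfrac{\sin(\alpha-\theta)}{\sin\alpha}\,b_j$ of $b_j$, with no $z$-dependent residue left over. This is the exact analogue of the Poisson-kernel computation of Section~3; the structural difference is that here the contributions of the $a$-factors and the $b$-factors reinforce one another into a constant, whereas in Section~3 they combined into the (non-constant) Poisson kernel, so one must track signs carefully to make sure reinforcement, and not cancellation, occurs.
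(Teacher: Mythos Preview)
Your overall strategy matches the paper's: compute $\partial_z N_1$ factorwise, write the conormal derivative as $2\,\mathrm{Re}(\cdots)$, then exploit the boundary relations $z\bar z=1$ on $C_1$ and $|z\sin(\alpha-\theta)+\sin\theta|=\sin\alpha$ on $C_0$. Your $C_1$ argument is complete and essentially identical to the paper's (the paper packages it as $z\partial_zN_1=-n-2i\sum\mathrm{Im}(\cdots)$ rather than your $-2n-S$ with $\mathrm{Re}\,S=-n$, but these are the same computation).

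On $C_0$ your order of operations creates the obstacle you flag. You apply the index-shift identity to \emph{both} the $b_k$-sum and the $\overline{a_k}$-sum, extracting $-\tfrac{2n\sin(\alpha-\theta)}{\sin\alpha}$ and leaving a remainder $S'$ whose real part you then have to evaluate. The paper avoids this: it applies the index-shift only to the $b_k$-sum (yielding $-\tfrac{n\sin(\alpha-\theta)}{\sin\alpha}$ plus a sum of terms $T_k=\frac{\zeta\sin(\alpha+(k-1)\theta)-\sin((k-1)\theta)}{b_k}$), and treats the $\overline{a_k}$-sum by \emph{directly} substituting the $C_0$ relation $z=\frac{-\bar z\sin\theta+\sin(\alpha+\theta)}{\bar z\sin(\alpha-\theta)+\sin\theta}$ into each term. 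This substitution, together with $|z\sin(\alpha-\theta)+\sin\theta|^2=\sin^2\alpha$, converts
\[
\frac{z\sin(\alpha-\theta)+\sin\theta}{\sin\alpha}\cdot\frac{\bar\zeta\sin(\alpha+k\theta)-\sin k\theta}{\overline{a_k}}
\quad\text{into}\quad
-\frac{\bar\zeta\sin(\alpha+k\theta)-\sin k\theta}{\overline{b_{k+1}}},
\]
i.e.\ each $\overline{a_k}$-term becomes a term over $\overline{b_{k+1}}$. After the shift $k\mapsto k-1$ (using $n\theta=\pi$ to wrap $k=n$ back to $k=0$) this second sum is exactly $\sum_k\overline{T_k}$. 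Hence $-\sum T_k+\sum\overline{T_k}=-2i\sum\mathrm{Im}\,T_k$ is purely imaginary, and $\mathrm{Re}\bigl(\tfrac{z\sin(\alpha-\theta)+\sin\theta}{\sin\alpha}\partial_zN_1\bigr)=-\tfrac{n\sin(\alpha-\theta)}{\sin\alpha}$ drops out with no residual verification needed. Your claimed collapse $\mathrm{Re}\,S'=-\tfrac{n\sin(\alpha-\theta)}{\sin\alpha}$ is in fact equivalent to this, so it is true, but proving it by your route (pairing after two index-shifts and a reindexing) is needlessly hard; substitute the boundary relation into $\overline{a_k}$ first and the pairing is immediate.
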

\begin{proof} The outward normal derivative is calculated straightforwardly.
\begin{align*}
\begin{split}
\partial_zN_1(z,\zeta)=&-\sum_{k=0}^{n-1}\frac{\zeta\sin{k\theta}+sin(\alpha-k\theta)}{z\zeta\sin{k\theta}+z\sin(\alpha-k\theta)-\zeta\sin(\alpha+k\theta)+\sin{k\theta}}\\
&-\sum_{k=0}^{n-1}\frac{\bar{\zeta}\sin(\alpha+k\theta)-\sin{k\theta}}{z\bar{\zeta}\sin(\alpha+k\theta)-(z+\bar{\zeta})\sin{k\theta}-\sin(\alpha-k\theta)}.
\end{split}
\end{align*}
In the case of $z\in C_1$, $z\bar{z}=1$,

\begin{equation*}
\begin{split}
	& z\partial_zN_1(z,\zeta)\\
	=&-\sum_{k=0}^{n-1}\frac{z\zeta\sin{k\theta}+z\sin(\alpha-k\theta)}{z\zeta \sin{k\theta}+z\sin(\alpha-k\theta)-\zeta\sin(\alpha+k\theta)+\sin{k\theta}}\\
    &-\sum_{k=0}^{n-1}\frac{z\bar{\zeta}\sin(\alpha+k\theta)-z\sin{k\theta}}{z\bar{\zeta}\sin(\alpha+k\theta)-(z+\bar{\zeta})\sin{k\theta}-\sin(\alpha-k\theta)}\\
	=&-\sum_{k=0}^{n-1} \left(1+\frac{\zeta\sin(\alpha+k\theta)-\sin{k\theta}}{z\zeta \sin{k\theta}+z\sin(\alpha-k\theta)-\zeta\sin(\alpha+k\theta)+\sin{k\theta}}\right)\\
    &+\sum_{k=0}^{n-1}\frac{\bar{\zeta}\sin(\alpha+k\theta)-\sin{k\theta}}{\bar{z}\bar{\zeta}\sin(k\theta)+\bar{z}\sin(\alpha-k\theta)-\bar{\zeta}\sin(\alpha+k\theta)+\sin{k\theta}}\\
		=&-n-2i\sum_{k=0}^{n-1}\mathrm{Im}\left(\frac{\zeta\sin(\alpha+k\theta)-\sin{k\theta}}{z\zeta\sin{k\theta}+z\sin(\alpha-k\theta)-\zeta\sin(\alpha+k\theta)+\sin{k\theta}}\right).
		\end{split}
\end{equation*}
Thereby for $z\in C_1$ and $\zeta\in\overline{D_0}\setminus\{z\}$,
\[
\partial_{\nu_z}N_1(z,\zeta)=z\partial_zN_1(z,\zeta)+\bar{z}\partial_{\bar{z}}N_1(z,\zeta)=2\mathrm{Re}(z\partial_zN_1(z,\zeta))=-2n.
\]
In the other case of $z\in C_0$, the outward normal derivative is
\[
\partial_{\nu_z}=-\left(\frac{z\sin(\alpha-\theta)+\sin\theta}{\sin\alpha}\partial_z+\frac{\bar{z}\sin(\alpha-\theta)+\sin\theta}{\sin\alpha}\partial_{\bar{z}}\right).
\]
Directly check that
\begin{equation*}
	\begin{split}
	&\frac{z\sin(\alpha-\theta)+\sin\theta}{\sin\alpha}\frac{\zeta\sin{k\theta}+\sin(\alpha-k\theta)}{z\zeta \sin{k\theta}+z\sin(\alpha-k\theta)-\zeta\sin(\alpha+k\theta)+\sin{k\theta}}\\
	=&\frac{\sin(\alpha-\theta)}{\sin\alpha}+\frac{\zeta\sin(\alpha+(k-1)\theta)-\sin{(k-1)\theta}}{z\zeta \sin{k\theta}+z\sin(\alpha-k\theta)-\zeta\sin(\alpha+k\theta)+\sin{k\theta}},
	\end{split}
\end{equation*}
replacing $z$ by $\frac{-\bar{z}\sin\theta+\sin(\alpha+\theta)}{\bar{z}\sin(\alpha-\theta)+\sin\theta}$ in 
\[
\frac{\bar{\zeta}\sin(\alpha+k\theta)-\sin{k\theta}}{z\bar{\zeta}\sin(\alpha+k\theta)-(z+\bar{\zeta})\sin{k\theta}-\sin(\alpha-k\theta)}
\]
gives 
\begin{equation*}
\begin{split}
	&-\frac{\bar{\zeta}\sin(\alpha+k\theta)-\sin{k\theta}}{\bar{z}\bar{\zeta}\sin{(k+1)\theta}+\bar{z}\sin(\alpha-(k+1)\theta)-\bar{\zeta}\sin(\alpha+(k+1)\theta)+\sin{(k+1)\theta}}\\
&\times\frac{\bar{z}\sin(\alpha-\theta)+\sin\theta}{\sin\alpha}.
\end{split}
\end{equation*}
Then
\begin{equation*}
\begin{split}
	&\frac{z\sin(\alpha-\theta)+\sin\theta}{\sin\alpha}\frac{\bar{\zeta}\sin(\alpha+k\theta)-\sin{k\theta}}{z\bar{\zeta} \sin(\alpha+k\theta)-(z+\bar{\zeta})\sin{k\theta}-\sin(\alpha-k\theta)}\\
	=&-\frac{\bar{\zeta}\sin(\alpha+k\theta)-\sin{k\theta}}{\bar{z}\bar{\zeta} \sin{(k+1)\theta}+\bar{z}\sin(\alpha-(k+1)\theta)-\bar{\zeta}\sin(\alpha+(k+1)\theta)+\sin{(k+1)\theta}}.
\end{split}
\end{equation*}
Hence
\begin{equation*}
	\begin{split}
	&\quad\frac{z\sin(\alpha-\theta)+\sin\theta}{\sin\alpha}\partial_z N_1(z,\zeta)\\
	&=-\frac{n\sin(\alpha-\theta)}{\sin\alpha}-\sum_{k=0}^{n-1}\frac{\zeta\sin(\alpha+(k-1)\theta)-\sin{(k-1)\theta}}{z\zeta \sin{k\theta}+z\sin(\alpha-k\theta)-\zeta\sin(\alpha+k\theta)+\sin{k\theta}}\\
	&\quad+\sum_{k=0}^{n-1}\frac{\bar{\zeta}\sin(\alpha+k\theta)-\sin{k\theta}}{\bar{z}\bar{\zeta} \sin{(k+1)\theta}+\bar{z}\sin(\alpha-(k+1)\theta)-\bar{\zeta}\sin(\alpha+(k+1)\theta)+\sin{(k+1)\theta}}\\
	&=-\frac{n\sin(\alpha-\theta)}{\sin\alpha}-2i\sum_{k=0}^{n-1}\mathrm{Im}\left(\frac{\zeta\sin(\alpha+(k-1)\theta)-\sin{(k-1)\theta}}{z\zeta \sin{k\theta}+z\sin(\alpha-k\theta)-\zeta\sin(\alpha+k\theta)+\sin{k\theta}}\right),
	\end{split}	
\end{equation*}
from which we know that, for $z\in C_0$ and $\zeta\in\overline{D_0}\setminus\{z\}$,
\[
\partial_{\nu_z}N_1(z,\zeta)=-2\,\mathrm{Re}\left(\frac{z\sin(\alpha-\theta)+\sin\theta}{\sin\alpha}\partial_z N_1(z,\zeta)\right)=\frac{2n\sin(\alpha-\theta)}{\sin\alpha}.
\]
\end{proof}
\noindent{\bf Remark 2.} Denote $\sigma(s)=\partial_{\nu_z}N_1(z,\zeta)$ for $z=z(s)\in \partial D_0$ and $\zeta\in\overline{D_0}\setminus\{z\}$, where $s$ is the arc length parameter, then restating the result of Lemma \ref{nflem1} gives
\[
\sigma(s)=
\begin{cases}
\frac{2n\sin(\alpha-\theta)}{\sin\alpha},& \mathrm{for}\ z(s)\in C_0;\\
-2n,&\mathrm{for}\  z(s)\in C_1.
\end{cases}
\]
Moreover 
\begin{align*}
\begin{split}
-\frac{1}{4\pi}\int_{\partial D_0}\sigma(s)\mathrm{d}s=&-\frac{1}{4\pi}\left(\frac{2n\sin(\alpha-\theta)}{\sin\alpha}\int_{C_0}\mathrm{d}s-2n\int_{C_1}\mathrm{d}s
\right)\\
=&-\frac{1}{4\pi}(4n(\alpha-\theta)-4n\alpha)\\
=&1.
\end{split}
\end{align*}

From the construction of $N_1(z,\zeta)$ and Remark 2, it is seen that as a function of variable $z$ for any $\zeta\in \overline{D_0}$
\begin{itemize}
	\item $N_1(z,\zeta)$ is harmonic in $D_0\setminus\{\zeta\}$ and continuously differentiable in $\overline{D_0}\setminus\{\zeta\}$,
	\item $N_1(z,\zeta)+\log|\zeta-z|^2$ is harmonic in $D_0$,
	\item  $\partial_{\nu_z}N_1(z,\zeta)=\sigma(s)$ for $z=z(s)\in \partial D_0$, where $s$ is the arc length parameter. The density function $\sigma$ is a real-valued, piecewise constant function of $s$ with finite mass $\int_{\partial D_0}\sigma(s)\mathrm{d}s.$
\end{itemize}
We call $N_1(z,\zeta)$ a Neumann function for the domain $D_0$. 

{\bf Remark 3.} We conjecture that the term $\int_{\partial D_0}\sigma(z)N_1(z,\zeta)\mathrm{d}s_z$ is constant for $\zeta\in D_0$ . 
If this conjecture can be verified, suppose the constant is $K$,  then modifying $N_1(z,\zeta)$ by $N_1(z,\zeta)-K$ will give an unique Neumann function for domain $D_0$ which satisfies an extra condition
\begin{itemize}
	\item  $\int_{\partial D_0}\sigma(z)N_1(z,\zeta)\mathrm{d}s_z=0$ (normalization condition).
\end{itemize} 

The missing of normalization condition for the Neumann function doesn't affect the  following discussion.

\begin{lemma}\label{nflimit}
	For $\zeta\in C_0$, 
	\[
	\lim\limits_{z\to C_0}\left\{\mathrm{Re}\left(-\frac{z\sin(\alpha-\theta)+\sin\theta}{\sin\alpha}\partial_z N_1(z,\zeta)\right)-p_0(z,\zeta)\right\}=\frac{n\sin(\alpha-\theta)}{\sin\alpha};
	\]
	while for $\zeta\in C_1$, 
	\[
	\lim\limits_{z\to C_1}\{\mathrm{Re}(z\partial_z N_1(z,\zeta))-p_1(z,\zeta)\}=-n.
	\]
\end{lemma}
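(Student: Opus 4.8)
The plan is to treat the two boundary arcs separately and, in each case, to recognise the bracketed quantity as the difference of two pieces whose boundary behaviour is already understood: the $\partial_z N_1$-term equals, up to the factor $\tfrac12$, the outward normal derivative $\partial_{\nu_z}N_1$ restricted to the relevant arc, which was evaluated in Lemma \ref{nflem1}; while the Poisson-kernel term vanishes on the arc that bounds the domain to which it belongs. Since $N_1(\cdot,\zeta)$ is continuously differentiable on $\overline{D_0}\setminus\{\zeta\}$ and $p_0(\cdot,\zeta)$, $p_1(\cdot,\zeta)$ are continuous there (being explicit rational expressions in $z,\bar z$), passing to the limit $z\to C_0$ or $z\to C_1$ reduces to evaluating these continuous extensions on the boundary.

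Concretely, for $\zeta\in C_0$ I would use that on $C_0$ the outward normal derivative operator reads $\partial_{\nu_z}=-\bigl(\tfrac{z\sin(\alpha-\theta)+\sin\theta}{\sin\alpha}\partial_z+\tfrac{\bar z\sin(\alpha-\theta)+\sin\theta}{\sin\alpha}\partial_{\bar z}\bigr)$ and that $N_1$ is real-valued, so $\partial_{\bar z}N_1=\overline{\partial_z N_1}$; therefore on $C_0$
$$\mathrm{Re}\!\left(-\frac{z\sin(\alpha-\theta)+\sin\theta}{\sin\alpha}\,\partial_z N_1(z,\zeta)\right)=\frac12\,\partial_{\nu_z}N_1(z,\zeta)=\frac{n\sin(\alpha-\theta)}{\sin\alpha}$$
by Lemma \ref{nflem1}, and by the $C^1$-continuity of $N_1$ up to $\partial D_0$ this value is also the limit of the left-hand side as $z\to C_0$ from inside $D_0$. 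It then remains to show $p_0(z,\zeta)\to 0$ as $z\to C_0$; this is the standard vanishing of the Poisson kernel of the domain $\{w\bar w\sin(\alpha-\theta)+w\sin\theta+\bar w\sin\theta-\sin(\alpha+\theta)\ge0\}$ on its boundary $C_0$ away from the pole $\zeta$, and can be verified directly by inserting the relation $\bar z=\tfrac{-z\sin\theta+\sin(\alpha+\theta)}{z\sin(\alpha-\theta)+\sin\theta}$ that defines $C_0$ into the explicit formula for $p_0$ and simplifying with $\sin(\alpha+\theta)\sin(\alpha-\theta)+\sin^2\theta=\sin^2\alpha$. Subtracting the two limits gives $\tfrac{n\sin(\alpha-\theta)}{\sin\alpha}$. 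For $\zeta\in C_1$ the reasoning is the same: on $C_1$ one has $\partial_{\nu_z}=z\partial_z+\bar z\partial_{\bar z}$, hence $\mathrm{Re}(z\partial_z N_1(z,\zeta))=\tfrac12\partial_{\nu_z}N_1(z,\zeta)=-n$ by Lemma \ref{nflem1}, while $p_1(z,\zeta)=1-2\,\mathrm{Re}\bigl(\tfrac{z}{z-\zeta}\bigr)$ equals $0$ whenever $|z|=|\zeta|=1$ and $z\ne\zeta$; letting $z\to C_1$ by continuity produces the limit $-n$.

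The computational burden here is light, because the substantial work---computing $\partial_{\nu_z}N_1$ on $C_0$ and on $C_1$---was already carried out in Lemma \ref{nflem1}. The only steps requiring care are the interchange of $\mathrm{Re}$ with the limit and of the limit with evaluation on the boundary, which I would justify through the already-established $C^1$-continuity of $N_1$ on $\overline{D_0}\setminus\{\zeta\}$ together with the obvious continuity of the rational functions $p_0,p_1$ away from $\zeta$; and restricting $z$ to approach an interior point of the respective arc, i.e.\ away from the corners $e^{\pm i\alpha}$ and from $\zeta$, where this regularity is available without complication. I would also prefer to establish the vanishing of $p_0$ on $C_0$ and of $p_1$ on $C_1$ by the explicit one-line substitutions just described, rather than by quoting general Poisson-kernel theory, in keeping with the concrete style of the paper.
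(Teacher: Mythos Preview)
Your argument is correct as far as it goes, but it only settles the limit when $z$ approaches a point of $C_0$ (respectively $C_1$) \emph{different from} $\zeta$; you say this explicitly when you restrict ``away from $\zeta$''. That restriction is exactly the case the lemma must cover: in the proof of Theorem~4.3 the identity is used inside a boundary integral in the variable $\zeta$ and the point $z$ is sent to a fixed boundary point, so the Poisson-type singularity at $\zeta=z$ is precisely what produces $\gamma(\zeta)$ in the limit. If the difference were only known to converge away from the diagonal, one could not pass to the limit under the integral and recover the boundary value.

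The paper handles this by a different decomposition. Instead of evaluating the two terms separately on the boundary, it splits $-\tfrac{z\sin(\alpha-\theta)+\sin\theta}{\sin\alpha}\,\partial_z N_1(z,\zeta)$ (with $\zeta\in C_0$) as the single singular contribution $2\,\tfrac{z\sin(\alpha-\theta)+\sin\theta}{(z-\zeta)\sin\alpha}$ plus a remainder $T_0$ that is regular on all of $\overline{D_0}$, including at $z=\zeta$; for $\zeta\in C_0$ the two summands of $\partial_z N_1$ that blow up at $z=\zeta$ are the $k=0$ term of the first sum and the $k=n-1$ term of the second, and on $C_0$ each of them equals $1/(z-\zeta)$. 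Since the real part of this singular piece is, up to the additive constant $\tfrac{\sin(\alpha-\theta)}{\sin\alpha}$, exactly $p_0(z,\zeta)$, the singularities cancel identically in the bracketed difference \emph{before} any limit is taken. The computation of $\mathrm{Re}\,T_0$ on $C_0$ then reuses the identities from Lemma~\ref{nflem1} (this is where your idea of quoting that lemma reappears), giving $\tfrac{(n-1)\sin(\alpha-\theta)}{\sin\alpha}$ and hence the claimed value. The $C_1$ case is parallel: the two $k=0$ terms contribute $-2z/(z-\zeta)$, which matches the singular part of $p_1$, and the regular remainder $T_1$ has real part $n-1$ on $C_1$. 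Your route can be repaired by inserting this singular-term isolation; without it the proof has a genuine gap at the diagonal.
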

\begin{proof}
	For $\zeta\in C_0$, 
\begin{equation*}
\begin{split}
&\quad-\frac{z\sin(\alpha-\theta)+\sin\theta}{\sin\alpha}\partial_z N_1(z,\zeta)\\
=&2\frac{z\sin(\alpha-\theta)+\sin\theta}{(z-\zeta)\sin\alpha}+T_0(z,\zeta),
\end{split}	
\end{equation*}
where 
\begin{align*}
\begin{split}
	T_0(z,\zeta)=&\sum_{k=1}^{n-1}\frac{\zeta\sin{k\theta}+sin(\alpha-k\theta)}{z\zeta\sin{k\theta}+z\sin(\alpha-k\theta)-\zeta\sin(\alpha+k\theta)+\sin{k\theta}}\\
	&+\sum_{k=0}^{n-2}\frac{\bar{\zeta}\sin(\alpha+k\theta)-\sin{k\theta}}{z\bar{\zeta}\sin(\alpha+k\theta)-(z+\bar{\zeta})\sin{k\theta}-\sin(\alpha-k\theta)}.
\end{split}
\end{align*}
From the proof of Lemma \ref{nflem1}, we see that $T_0(z,\zeta)$  goes to 
\begin{align*}
\begin{split}
\frac{(n-1)\sin(\alpha-\theta)}{\sin\alpha}-2i\sum_{k=1}^{n-1}\mathrm{Im}\left(\frac{\zeta\sin(\alpha+(k-1)\theta)-\sin{(k-1)\theta}}{z\zeta \sin{k\theta}+z\sin(\alpha-k\theta)-\zeta\sin(\alpha+k\theta)+\sin{k\theta}}\right)
\end{split}
\end{align*} 
when $z$ goes to $C_0$. Hence 
\begin{align*}
\begin{split}
&\lim\limits_{z\to C_0}\mathrm{Re}\left(-\frac{z\sin(\alpha-\theta)+\sin\theta}{\sin\alpha}\partial_z N_1(z,\zeta)\right)\\
=&\lim\limits_{z\to C_0}2\mathrm{Re}\frac{z\sin(\alpha-\theta)+\sin\theta}{(z-\zeta)\sin\alpha}+\frac{(n-1)\sin(\alpha-\theta)}{\sin\alpha}\\
=&\lim\limits_{z\to C_0}p_0(z,\zeta)+\frac{n\sin(\alpha-\theta)}{\sin\alpha}.
\end{split}
\end{align*}

For $\zeta\in C_1$, 
\[
 z\partial_z N_1(z,\zeta)=-\frac{2z}{z-\zeta}-T_1(z,\zeta),
\]
where 
\begin{align*}
\begin{split}
T_1(z,\zeta)=&\sum_{k=1}^{n-1}\frac{\zeta\sin{k\theta}+sin(\alpha-k\theta)}{z\zeta\sin{k\theta}+z\sin(\alpha-k\theta)-\zeta\sin(\alpha+k\theta)+\sin{k\theta}}\\
&+\sum_{k=1}^{n-1}\frac{\bar{\zeta}\sin(\alpha+k\theta)-\sin{k\theta}}{z\bar{\zeta}\sin(\alpha+k\theta)-(z+\bar{\zeta})\sin{k\theta}-\sin(\alpha-k\theta)}.
\end{split}
\end{align*}
When $z$ goes to $C_1$, $T_1(z,\zeta)$ turns out to be 
\[
(n-1)+2i\sum_{k=1}^{n-1}\mathrm{Im}\left(\frac{\zeta\sin(\alpha+k\theta)-\sin{k\theta}}{z\zeta\sin{k\theta}+z\sin(\alpha-k\theta)-\zeta\sin(\alpha+k\theta)+\sin{k\theta}}\right),
\]
which is also shown in the proof of Lemma \ref{nflem1}. Therefore
\begin{align*}
\begin{split}
\lim\limits_{z\to C_1}\mathrm{Re}(z\partial_z N_1(z,\zeta))&
=-\lim\limits_{z\to C_1}2\mathrm{Re}\frac{z}{z-\zeta}-(n-1)\\
&=\lim\limits_{z\to C_1}p_1(z,\zeta)-n.
\end{split}
\end{align*}
\end{proof}

\noindent{\bf Neumann Boundary Problem:} Find a solution to the Poisson equation $w_{z\bar{z}}=f$ in $D_0$, satisfying $\partial_{\nu_z}w=\gamma$ on $\partial D_0$ except for the two corner points.

We recall the Neumann representation formula here, see \cite{BeBS20}. For a regular domain $D$ in $\mathbb{C}$, any function $w(z)\in \mathrm{C}^2(D,\mathbb{C})\cap\mathrm{C}(\bar{D},\mathbb{C})$ can be represented by 
\begin{align*}
\begin{split}
w(z)=&-\frac{1}{4\pi}\int_{\partial D} w(\zeta)\sigma(\zeta)\mathrm{d}s_\zeta+\frac{1}{4\pi}\int_{\partial D} \partial_{\nu_\zeta}w(\zeta)N_1(z,\zeta)\mathrm{d}s_\zeta\\
&-\frac{1}{\pi}\int_{D} w_{\zeta\bar{\zeta}}(\zeta)N_1(z,\zeta)\mathrm{d}\xi\mathrm{d}\eta,
\end{split}
\end{align*}
where $N_1(z,\zeta)$ is the Neumann function of $D$, $\sigma(\zeta)=\partial_{\nu_z}N_1(z,\zeta)$ for $z\in\partial D$.
This formula is applied to provide the solutions to the Neumann boundary problem, shown as follows.

\begin{theorem}
The Neumann boundary problem is solvable if and only if 
\[
\int_{\partial D_0} \gamma(\zeta)\mathrm{d}s_\zeta
=4\int_{D_0} f(\zeta)\mathrm{d}\xi\mathrm{d}\eta,
\]	
all the solutions are of the following form 
\begin{align*}
w(z)=c+\frac{1}{4\pi}\int_{\partial D_0} \gamma(\zeta)N_1(z,\zeta)\mathrm{d}s_\zeta-\frac{1}{\pi}\int_{D_0} f(\zeta)N_1(z,\zeta)\mathrm{d}\xi\mathrm{d}\eta,
\end{align*}
where $c$ is an arbitrary constant in $\mathbb{C}$.
\end{theorem}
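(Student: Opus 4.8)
\emph{Overall approach.} The plan is to read off all three assertions --- necessity of the compatibility condition, solvability under it, and the description of the solution set --- from the Neumann representation formula, using Lemmas~\ref{nflem1} and~\ref{nflimit} together with the approximate-identity property of the Poisson kernels $p_0,p_1$. First, if $w$ is any solution, substituting $\partial_{\nu_\zeta}w=\gamma$ and $w_{\zeta\bar\zeta}=f$ into the representation formula and observing that the term $-\tfrac{1}{4\pi}\int_{\partial D_0}w(\zeta)\sigma(\zeta)\,ds_\zeta$ carries no $z$-dependence, hence is a constant $c$, shows that $w$ must coincide with the function in the theorem. Conversely, whether a function of that shape solves the problem is independent of $c$, since $\partial_{\nu_z}$ annihilates constants; so it suffices to analyse the function $w$ given by the displayed formula with $c=0$.

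\emph{The Poisson equation.} I would first check $w_{z\bar z}=f$. The boundary term $\tfrac{1}{4\pi}\int_{\partial D_0}\gamma(\zeta)N_1(z,\zeta)\,ds_\zeta$ is harmonic in $z\in D_0$, because $N_1(\cdot,\zeta)$ is harmonic in $D_0$ for every $\zeta\in\partial D_0$, and so is killed by $\partial_z\partial_{\bar z}$. For the area term, since $N_1(z,\zeta)+\log|\zeta-z|^2$ is harmonic in $z$, one has $\partial_z\partial_{\bar z}N_1(\cdot,\zeta)=-\pi\delta_\zeta$ in the sense of distributions, whence $\partial_z\partial_{\bar z}\big(-\tfrac1\pi\int_{D_0}f(\zeta)N_1(z,\zeta)\,d\xi\,d\eta\big)=f(z)$.

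\emph{The boundary condition (the main step).} Next I would compute $\partial_{\nu_z}w$ and let $z\to z_0\in\partial D_0$ with $z_0$ not one of the two corners; write $C$ for the arc ($C_0$ or $C_1$) through $z_0$. On the area integral and on the complementary arc $\partial D_0\setminus C$, the point $\zeta$ stays away from $z_0$, so Lemma~\ref{nflem1} gives $\partial_{\nu_z}N_1(z,\zeta)\to\sigma(s_{z_0})$, a constant; by dominated convergence these contribute $\tfrac{\sigma(s_{z_0})}{4\pi}\int_{\partial D_0\setminus C}\gamma\,ds-\tfrac{\sigma(s_{z_0})}{\pi}\int_{D_0}f\,d\xi\,d\eta$. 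On $C$ itself, Lemma~\ref{nflimit} lets me write $\tfrac12\partial_{\nu_z}N_1(z,\zeta)=p_C(z,\zeta)+\rho(z,\zeta)$, where $p_C$ is $p_0$ if $C=C_0$ and $p_1$ if $C=C_1$, and $\rho$ extends continuously up to $C$ with boundary value $\tfrac12\sigma(s_{z_0})$. Then $\tfrac{1}{2\pi}\int_{C}\gamma(\zeta)p_C(z,\zeta)\,ds_\zeta\to\gamma(z_0)$, because $p_C$ is the Poisson kernel of the full circle carrying $C$, has total mass $2\pi$, concentrates its mass at $z_0$, which lies in the interior of the arc $C$, while its contribution over the complementary arc tends to $0$; and $\tfrac{1}{2\pi}\int_{C}\gamma\rho\,ds_\zeta\to\tfrac{\sigma(s_{z_0})}{4\pi}\int_{C}\gamma\,ds$. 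Collecting the pieces,
\[
\partial_{\nu_z}w(z_0)=\gamma(z_0)+\frac{\sigma(s_{z_0})}{4\pi}\Big(\int_{\partial D_0}\gamma(\zeta)\,ds_\zeta-4\int_{D_0}f(\zeta)\,d\xi\,d\eta\Big).
\]
Since $\sigma\equiv-2n\neq0$ on $C_1$ (and the extra term is harmless on $C_0$, where $\sigma$ may vanish), this equals $\gamma(z_0)$ at every non-corner boundary point precisely when $\int_{\partial D_0}\gamma\,ds=4\int_{D_0}f\,d\xi\,d\eta$. This gives both solvability under the condition and its necessity, and, combined with the first paragraph, shows that the solution set is exactly the stated one-parameter family.

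\emph{Main obstacle.} The delicate part is the boundary analysis of the third paragraph: justifying the passage to the limit under the area integral (local uniform integrability of $\zeta\mapsto\partial_{\nu_z}N_1(z,\zeta)$ near its $|z-\zeta|^{-1}$-type singularity, uniformly as $z$ approaches $\partial D_0$, which is where the hypothesis $f\in\mathrm{L}_p(D_0;\mathbb{C})$, $p>2$, enters), the uniform decay of $p_0$ and $p_1$ on the complementary arc, and the approximate-identity property for the \emph{arc-restricted} Poisson integral; one must also dispose of the two corner points (a null set on $\partial D_0$) and, if $w$ is only in the Sobolev class $W^{2,p}$, invoke the corresponding version of the representation formula from \cite{BeBS20}.
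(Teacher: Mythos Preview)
Your proposal is correct and follows essentially the same route as the paper: verify $w_{z\bar z}=f$ from the fundamental-solution property of $N_1$, then analyse the boundary limit of $\partial_{\nu_z}w$ using Lemmas~\ref{nflem1} and~\ref{nflimit} together with the approximate-identity behaviour of $p_0,p_1$ to extract the compatibility condition. The only minor deviation is in the description of the solution set: you read off the form of an arbitrary solution directly from the Neumann representation formula (the term $-\tfrac{1}{4\pi}\int_{\partial D_0}w\sigma\,ds$ being a $z$-independent constant), whereas the paper instead argues that the difference of two solutions is harmonic with vanishing normal derivative and hence constant.
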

\begin{proof}
The Neumann function is a fundamental solution to the Poisson equation, and the boundary integral $\int_{\partial D_0} \gamma(\zeta)N_1(z,\zeta)\mathrm{d}s_\zeta$ is harmonic in $D_0$. It immediately implies that $w_{z\bar{z}}=f$. 
\[
\partial_{\nu_z}w(z)=\frac{1}{4\pi}\int_{\partial D_0}\gamma(\zeta)\partial_{\nu_z}N_1(z,\zeta)\mathrm{d}s_\zeta-\frac{1}{\pi}\int_{D_0} f(\zeta)\partial_{\nu_z}N_1(z,\zeta)\mathrm{d}\xi\mathrm{d}\eta.
\]
On the basis of Lemma \ref{nflem1} and Lemma \ref{nflimit}, if $\zeta\in C_0$, 
\begin{align*}
\lim\limits_{z\to \zeta}\partial_{\nu_z}w(z)=\gamma(\zeta)+\frac{2n\sin(\alpha-\theta)}{\sin\alpha}\left(\frac{1}{4\pi}\int_{\partial D_0}\gamma(\zeta)\mathrm{d}s_\zeta
-\frac{1}{\pi}\int_{D_0} f(\zeta)\mathrm{d}\xi\mathrm{d}\eta\right),
\end{align*}
if $\zeta\in C_1$, 
\begin{align*}
\lim\limits_{z\to \zeta}\partial_{\nu_z}w(z)=\gamma(\zeta)-2n\left(\frac{1}{4\pi}\int_{\partial D_0}\gamma(\zeta)\mathrm{d}s_\zeta
-\frac{1}{\pi}\int_{D_0} f(\zeta)\mathrm{d}\xi\mathrm{d}\eta\right).
\end{align*}
Then $\partial_{\nu_z}w(z)=\gamma(z)$ on $\partial D_0$ if and only if 
\begin{align*}
\int_{\partial D_0} \gamma(\zeta)\mathrm{d}s_\zeta
=4\int_{D_0} f(\zeta)\mathrm{d}\xi\mathrm{d}\eta.
\end{align*}
Hence function $w(z)$ solves the Neumann problem if the solubility condition is satisfied.

Suppose $\phi(z)$ also solves the Neumann problem, then $\phi(z)-w(z)$ is harmonic in $D_0$ and its normal derivative vanishes on $\partial D_0$. It implies that $\phi(z)-w(z)$ must be a constant. 

To sum up, if the Neumann problem is solvable, all the solutions are of the form as
\[
w(z)=c+\frac{1}{4\pi}\int_{\partial D_0} \gamma(\zeta)N_1(z,\zeta)\mathrm{d}s_\zeta-\frac{1}{\pi}\int_{D_0} f(\zeta)N_1(z,\zeta)\mathrm{d}\xi\mathrm{d}\eta.
\]

\end{proof}

\end{document}